\documentclass[11pt,oneside]{amsart}

\title{The set splittability problem}
\author{Peter Bernstein}
\address{Peter Bernstein, Tufts University, Medford, MA 02155}
\email{peter.bernstein@tufts.edu}
\author{Cashous Bortner}
\address{Cashous Bortner, University of Nebraska, Lincoln, NE 68588}
\email{cashous.bortner@huskers.unl.edu}
\author{Samuel Coskey}
\address{Samuel Coskey, Boise State University, Boise, ID 83702}
\email{scoskey@gmail.com}
\author{Shuni Li}
\address{Shuni Li, Macalester College, Saint Paul, MN 55105}
\email{sli@macalester.edu}
\author{Connor Simpson}
\address{Connor Simpson, Cornell University, Ithaca, NY 14853}
\email{cgs93@cornell.edu}
\subjclass[2010]{05D05, 
		05C15,  
	    	11K38,	
		68Q17}  

\usepackage[T1]{fontenc}
\usepackage{lmodern}
\usepackage[marginratio=1:1]{geometry}
\usepackage{setspace}\onehalfspacing\raggedbottom
\usepackage{mathpazo,bm,amssymb}

\usepackage{xfrac}
\usepackage{color}
\usepackage{mathtools}
\usepackage{caption}
\usepackage{textcomp}
\usepackage{tikz}
\usetikzlibrary{positioning,shapes.geometric}
\usetikzlibrary{intersections}
\usetikzlibrary{calc}
\usepackage{array}
\usepackage{hyperref}

\newtheorem{thm}{Theorem}[section]
\newtheorem{lem}[thm]{Lemma}
\newtheorem{prop}[thm]{Proposition}

\newtheorem{statement}[thm]{Statement}
\theoremstyle{definition}

\theoremstyle{remark}

\newtheorem{claim}[thm]{Claim}
\newenvironment{claimproof}{%
  \begin{proof}[Proof of claim]%
  }{%
  \end{proof}%
}

\renewcommand{\labelenumi}{(\alph{enumi})}
\renewcommand{\theenumi}{\alph{enumi}}

\usepackage{etoolbox}
\makeatletter\pretocmd{\@seccntformat}{\S}{}{}
  \pretocmd{\@subseccntformat}{\S}{}{}\makeatother
\setcounter{MaxMatrixCols}{30}

\newcommand{\floor}[1]{\ensuremath{\left\lfloor #1 \right\rfloor}}
\newcommand{\ceiling}[1]{\ensuremath{\left\lceil #1 \right\rceil}}
\newcommand{\nint}[1]{\ensuremath{\left\lfloor #1 \right\rceil}}

\DeclareMathOperator{\N}{\mathbb{N}}
\DeclareMathOperator{\R}{\mathbb{R}}

\renewcommand{\subset}{\subseteq}

\DeclareMathOperator{\disc}{disc}
\newcommand{\B}{\mathcal{B}}
\newcommand{\half}{\frac{1}{2}}

\newcommand{\psplit}{\textsc{$p$-Split}}
\newcommand{\zoe}{\textsc{ZOE}}
\renewcommand{\vec}[1]{\mathbf{#1}}

\newcommand{\one}{\vec{1}}
\newcommand{\x}{\vec{x}}
\newcommand{\y}{\vec{y}}

\begin{document}

\begin{abstract}
  The set splittability problem is the following: given a finite collection of finite sets, does there exits a single set that contains exactly half the elements from each set in the collection? (If a set has odd size, we allow the floor or ceiling.) It is natural to study the set splittability problem in the context of combinatorial discrepancy theory and its applications, since a collection is splittable if and only if it has discrepancy $\leq1$.
  
  We introduce a natural generalization of splittability problem called the $p$-splittability problem, where we replace the fraction $\frac12$ in the definition with an arbitrary fraction $p\in(0,1)$. We first show that the $p$-splittability problem is NP-complete. We then give several criteria for $p$-splittability, including a complete characterization of $p$-splittability for three or fewer sets ($p$ arbitrary), and for four or fewer sets ($p=\frac12$). Finally we prove the asymptotic prevalence of splittability over unsplittability in an appropriate sense.
\end{abstract}

\maketitle

\section{Introduction}
\label{sec:intro}

Let $\B = \{B_1, \ldots, B_n\}$ be a collection of finite sets and let $0\leq p\leq1$. We say the collection $\B$ is \emph{$p$-splittable} if there exists a set $S$ (called a $p$-\emph{splitter}) such that for all $i\leq n$, we have that $|S \cap B_i|=\nint{p|B_i|}$, the nearest integer to $p|B_i|$. In the special case when $p=\frac12$, we will sometimes suppress the $p$ and use the terms $\B$ is \emph{splittable}, and $S$ is a \emph{splitter}.

Of course, the nearest integer $\nint{x}$ is not well-defined when $x$ is a half-integer; throughout the paper we adopt the convention that when $x$ is a half-integer, a statement about $\nint{x}$ is considered true if there is some choice of rounding which makes it true. Thus if $\nint{p|B_i|}$ is a half-integer, a $p$-splitter is allowed to satisfy either $|S \cap B_i|=\floor{p|B_i|}$ or $|S \cap B_i|=\ceiling{p|B_i|}$.

It is natural to study splittability and its generalizations in the context of combinatorial discrepancy theory. Given a collection $\B$ as above, the \emph{discrepancy} of $\B$ is
\[ \disc(\B) = \min_{S} \max_{i\leq n}\big| | B_i \cap S| - | B_i \setminus S| \big|\text{.}
\]
Intuitively, the discrepancy measures to what extent it is possible to simultaneously and evenly split each set in the collection. In fact $\disc(\B) \leq 1$ if and only if $\B$ is ($\frac{1}{2}$-) splittable.

In recent decades there have been many studies of upper bounds on the discrepancy of general and particular collections of sets. In 1981 Beck and Fiala showed that if every element of $\bigcup B$ is contained in at most $t$ of the sets in $\B$, then $\disc(\B)\leq2t-2$ \cite{beckfiala}. Incremental improvements to this bound can be found in works such as \cite{bednarchakHelm97, helm99, bukh13}. In 1985, Spencer gave an upper bound for the discrepancy of an arbitrary collection:
\[ \disc(\B) \leq 6 \sqrt{n}
\]
where $n$ is the number of sets \cite{sixstdev}.
Of course the discrepancy of any given collection may be much smaller than this bound, and since in most applications least discrepancy is best, it is natural to study the discrepancy $\leq1$ case.

Set splittability can also be viewed as a combinatorial version of the outcome of the ham sandwich theorem: given Lebesgue measurable subsets $B_1,\ldots,B_n\subset\R^n$ there exists a hyperplane $H$ such that for all $i\leq n$ exactly half the measure of $B_i$ lies to each side of $H$. If Lebesgue measure is replaced by an atomic measure, then some of the mass of $B_i$ may lie on $H$ itself. In this case the conclusion must be modified to say that that \emph{at most} half the measure of $B_i$ lies to each side of $H$ \cite{hamsandwichmayo}. Thus a ham sandwich hyperplane does not precisely solve the splittability problem, nor does set splittability help to find a geometric hyperplane, but the two problems are conceptually related.

A third way to think of set splittability is as a very strong form of hypergraph $2$-colorability. Recall that a hypergraph with hyperedges $B_1,\ldots,B_n$ is $2$-colorable if there exists a $\{\text{red},\text{blue}\}$-coloring of its vertices such that no hyperedge is monochromatic. With $p$-splittability we ask not simply that both colors are represented in each hyperedge, but that the color red always appears a prescribed percentage of the time.

In the next section we explore the computational complexity of $p$-splittability. In the case $p=\frac12$ it is known that the question of deciding whether a given collection is splittable is NP-complete. This follows from the fact that it is NP-hard to distinguish collections of discrepancy $0$ from collections of discrepancy $\Omega(\sqrt{n})$ \cite{charikar}. The significance of this result is that while there are randomized algorithms to find witnesses to Spencer's theorem \cite{bansal10,lovettMeka12}, in general even if a collection has discrepancy $o(\sqrt{n})$ one cannot expect to efficiently find a witness for this. In another related result, the problem of deciding whether a given hypergraph is $2$-colorable is NP-complete \cite{lovasz}. We will establish the corresponding hardness results in the case of $p$-splittability for arbitrary $p$. That is, we show that for any $0<p<1$, the $p$-splittability problem is NP-complete.

The fact that the $p$-splittability decision problem is hard means we do not expect to find a general and useful characterization of $p$-splittability. However it is possible to do so for small collections and for other special collections of sets. For an example involving (very) small collections, we will show that a collection $\B$ of at most two sets is $p$-splittable for any $p$. For an example involving special collections, suppose that $\B=\{B_1,\ldots,B_n\}$ is a collection of $n$ sets such that every element $x$ lies in exactly $n-1$ sets of $\B$. In this case we will show that $\B$ is $p$-splittable if and only if the sum $\nint{p|B_1|}+\cdots+\nint{p|B_n|}$ is divisible by $n-1$. (Note here our convention that half-integers may be rounded either up or down to make the condition hold.) The calculations used in these two results eventually lead us to a complete algebraic characterization of $p$-splittability for collections $\B$ of at most three sets.

If one specializes to the important case $p=\frac12$, some things become simpler and more characterizations become tractable. For example, if $\B=\{B_1,B_2,B_3\}$ is a collection of three sets then $\B$ is $\frac12$-unsplittable if and only if the sets $B_1\cap B_2\cap B_3^c$, $B_1\cap B_2\cap B_3^c$, $B_1\cap B_2\cap B_3^c$ are each odd in size and collectively cover $\bigcup\B$. (We will call these three sets the Venn regions of multiplicity $2$. This fact was previously observed in \cite{reu2014}.)

In this paper we state a complete characterization of the $\frac12$-unsplittable collections of four sets in terms of the sizes of its Venn regions. The characterization is substantially more complex than for three sets and involves more than ten delineated cases. To support the characterization we first prove a reduction lemma which implies that if $\B$ is unsplittable then it remains unsplittable after reducing the number of elements of each Venn region modulo $2$. We then carry out an exhaustive search for unsplittable configurations with a small number of elements. We remark that the exhaustive search was done with the aid of a computer program, and though it completed successfully we have not formally verified the correctness of the program.

The characterization of the $\frac12$-unsplittable configurations of four or fewer sets easily implies that unsplittability is extremely rare for collections of four or fewer sets. Although our method of finding unsplittable configurations becomes intractable for collections of five or more sets, we can show that this rarity phenomenon remains true. Specifically we show that if $n\to\infty$ and $k$ grows sufficiently fast relative to $n$, then the probability that a collection with $n$ sets and $k$ elements is splittable converges to $1$. In particular if $n$ is fixed and $k$ is large enough then most collections with $n$ sets and $k$ elements are splittable.

This paper is organized as follows. In Section~\ref{sec:complexity} we prove that the problem of deciding whether a given collection is $p$-splittable is NP-complete. In Section~\ref{sec:psplit}, we give criteria for deciding whether some special collections are $p$-splittable, and provide a complete characterization of $p$-splittability for collections of at most three sets. In Section~\ref{sec:12split}, we give further splittability criteria for the special case $p=\frac{1}{2}$ and use them to give a complete characterization of $\frac{1}{2}$-splittability for collections of at most four sets. Finally we show that for collections with sufficiently many elements, splittability is by far more common than unsplittability.

\textbf{Acknowledgement}. This article represents a portion of the research carried out during the 2016 math REU program at Boise State University. The program was supported by NSF grant \#DMS 1359425 and by Boise State University. We are grateful to several referees for valuable feedback and support.

\section{The complexity of $p$-splittability}
\label{sec:complexity}

In this section we establish that the $p$-splittability problem is NP-complete. Before proceeding, we set up some notation and clarify how we regard the $p$-splittability problem formally as a decision problem, which we denote $\psplit$.

To begin, if $\B=\{B_1,\ldots,B_n\}$ is a finite collection of subsets of $\{1,\ldots,m\}$ then the \emph{incidence matrix} of $\B$ is the $n\times m$ matrix $M$ whose $(i,j)$ entry is $1$ whenever $j\in B_i$, and $0$ otherwise. We will make significant use of the following notations surrounding incidence matrices: $M_i$ for the $i$th row of $M$; $\one$ for a vector of $1$'s whose length is determined by the context; $M_i\one$ for the number of $1$'s in the $i$th row of $M$, and; $\nint{pM\one}$ for the vector whose $i$th component is $\nint{pM_i\one}$).

Officially, an instance of $\psplit$ consists of a binary matrix $M$, which we think of as the incidence matrix of a collection $\B$. The matrix $M$ lies in $\psplit$ if and only if there exists a binary vector $\mathbf y$ such that $M\mathbf y=\nint{pM\one}$. Indeed, since $\nint{pM_i\one}=\nint{p|B_i|}$, we have that a binary vector $\mathbf y$ satisfies $M\mathbf y=\nint{pM\one}$ if and only if $\{i:y_i=1\}$ witnesses that $\B$ is $p$-splittable.

\begin{thm}
  \label{thm:np-complete}
  For any $0<p<1$, the problem $\psplit$ of determining whether a general collection is $p$-splittable is NP-complete.
\end{thm}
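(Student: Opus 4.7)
The plan is to verify membership in NP and then establish NP-hardness by reducing from a classical NP-complete problem. Membership in NP is immediate: given a candidate $0/1$ vector $\mathbf y$, one checks in polynomial time that $(M\mathbf y)_i = \nint{pM_i\one}$ for every row $i$ of $M$.

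For NP-hardness I would reduce from the Zero-One Equations problem (ZOE), which asks whether a given system $A\mathbf x = \mathbf b$ with $0/1$ data admits a $0/1$ solution. ZOE is a classical NP-complete problem and is structurally very close to $\psplit$: the only essential difference is that in $\psplit$ the right-hand side is rigidly dictated by the row sums through $\nint{pM_i\one}$, whereas in ZOE the right-hand side $\mathbf b$ is arbitrary. The reduction therefore hinges on a gadget that simulates an arbitrary target vector using these rigid targets.

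The gadget I propose is the following. Given an instance $(A,\mathbf b)$ of ZOE with row sums $s_i = A_i\one$, adjoin to the $i$th row of $A$ a block of $k_i$ new padding columns that are private to row $i$, and then introduce an auxiliary row $C_i$ whose support is exactly those $k_i$ padding columns. The $p$-splittability condition on $C_i$ forces any candidate splitter to include exactly $\nint{pk_i}$ of the padding elements of row $i$, pinning the padding contribution to row $i$ to a known constant. The splittability condition on the padded $i$th row then reduces to $(A\mathbf x)_i = \nint{p(s_i + k_i)} - \nint{pk_i}$, and the reduction is complete provided one can always choose $k_i$ (possibly after further augmenting $A$ by a few private stuffer columns, each in turn pinned by its own auxiliary row, so as to nudge $s_i$) to make this difference equal the prescribed $b_i$.

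The main obstacle is showing that suitable $k_i$ of polynomial size always exist. For rational $p = a/q$ in lowest terms, $\nint{pk}$ is eventually periodic in $k$ with period $q$, so the difference $\nint{p(s+k)} - \nint{pk}$ assumes only finitely many values as $k$ varies, determined by the residue of $k \pmod q$; a short case analysis should then produce $k_i \leq 2q$ achieving the desired value. Half-integer ambiguities of $\nint{\cdot}$ can be sidestepped by a slight perturbation of $k_i$. The case $p = \shalf$ deserves a little extra care because singleton auxiliary rows are ambiguous, but this is either covered by the classical NP-hardness result cited in the introduction via \cite{charikar}, or one can replace each singleton gadget by a larger rigid one. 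Putting everything together yields a polynomial-time many-one reduction from ZOE to $\psplit$ valid for every fixed $0 < p < 1$.
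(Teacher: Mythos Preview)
Your high-level strategy (reduce from ZOE, pad rows so that the $p$-split targets simulate the ZOE right-hand side) matches the paper's, but the specific gadget you propose does not work, and the repair you sketch is not enough to close the gap.

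The core arithmetic fails. With your padding row $C_i$ pinning the padding contribution to $\nint{pk_i}$, the condition on row $i$ becomes $(A\x)_i = \nint{p(s_i+k_i)} - \nint{pk_i}$. Writing $\nint{x}=x+r(x)$ with $|r(x)|\le\tfrac12$, this difference equals $ps_i + r(p(s_i+k_i)) - r(pk_i)$ and hence always lies in $(ps_i-1,\,ps_i+1)$; as an integer it is forced to be $\lfloor ps_i\rfloor$ or $\lceil ps_i\rceil$. In particular it is \emph{never} $1$ once $ps_i\ge 2$, e.g.\ for $p=\tfrac13$ and $s_i=10$ the only attainable values are $3$ and $4$, regardless of $k_i$. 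Your stuffer idea does not rescue this for $p<\tfrac12$: a singleton auxiliary row has target $\nint{p}=0$, so every stuffer is pinned to $0$; it increases the row sum while contributing nothing, pushing the attainable difference \emph{further} from $1$, not closer. For $p<\tfrac12$ you have no mechanism to pin a column to $1$, and that is exactly the degree of freedom you are missing. (The periodicity remark for rational $p$ is correct but irrelevant: periodicity only tells you the difference takes finitely many values, not that $1$ is among them.)

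The paper's construction supplies precisely this missing ingredient. Instead of a single auxiliary row per block, it introduces two pools of shared auxiliary columns $T$ and $F$ and a combinatorial family of gadget rows (the $D$/$E$ blocks): each gadget row contains one $T$-column together with a $\lceil q\rceil$-subset of $F$, has target $1$, and the collection of all such rows forces every $T$-column to $1$ and every $F$-column to $0$ simultaneously. With both forced-$1$ and forced-$0$ columns available, one places $s_i-1$ forced-$1$'s and $\lceil qs_i\rceil - s_i + 1$ forced-$0$'s in row $i$, so that the target becomes exactly $s_i$ and the forced contribution is $s_i-1$, leaving $(A\x)_i=1$ as required. To complete your argument you would need a gadget of comparable strength---one that pins individual columns to $1$ even when $p<\tfrac12$---and that is the nontrivial part of the proof.
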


Note that $\psplit$ lies in NP because given an instance $M$ of $\psplit$ and a characteristic vector $\mathbf y$ of an ostensible splitter, one can easily decide in polynomial time (in the number of entries of $M$) whether $(M\mathbf y)_i$ is equal to $\nint{pM_i\one}$ for each $i$.

To establish that $\psplit$ is NP-complete, we will exhibit a polynomial-time reduction from the decision problem $\zoe$, which is known to be NP-complete, to $\psplit$. Here $\zoe$ stands for \emph{zero one equations}, and is formalized as follows. An instance of $\zoe$ consists of a binary matrix $A$. The matrix $A$ lies in $\zoe$ if and only if there exists a binary vector $\x$ such that $A\x=\one$. We note that $\zoe$ is similar to zero-one integer programming \cite{karp}, and its NP-completeness is established in \cite{zoe}.

In the definition of $\zoe$, we can assume without loss of generality that the matrix $A$ has no zero rows, since otherwise $A\x=\one$ is guaranteed to have no solution. We can further assume that at least one row of $A$ has at least two $1$'s, since otherwise $A\x=\one$ is guaranteed to have a solution, namely $\x=\one$.

Now in order to establish Theorem~\ref{thm:np-complete}, we will describe a mapping from binary matrices $A$ to binary matrices $M=M(A)$, with the property that $A$ lies in $\zoe$ if and only if $M$ lies in $\psplit$. In order to guarantee this, the matrix $M$ that we construct will have the special properties:
\begin{enumerate}
  \item $A$ is an upper-left submatrix of $M$;
  \item any solution $\x$ to $A\x=\one$ extends to a solution $\y$ to $M\y=\nint{pM\one}$; and
  \item any solution $\y$ to $M\y=\nint{pM\one}$ restricts to a solution $\x$ to $A\x=\one$ (or else its binary complement does; see below).
\end{enumerate}

Having described our general approach, we now proceed with the details.

\subsection{Specification of the construction}
\label{ssec:spec}

Let $A$ be a given $r\times c$ binary matrix, and let $p\in(0,1)$ be given. As mentioned above we may assume that for all $i$ we have $A_i\one>0$, and that for some $i$ we have $A_i\one>1$. We may further assume that $0<p\leq\frac12$ (without loss of generality as argued in Subsection~\ref{ssec:proof}). We construct a block matrix $M$ of the form:
\[M=
  \begin{bmatrix}
    A & B & C\\
    0 & D & E
  \end{bmatrix}.
\]
We now describe the blocks of $M$. Of course $A$ is the given matrix, and $0$ is a matrix of $0$'s of the appropriate dimensions. Before defining the rest of the blocks, we let $s=\max_i A_i\one$, and let $T$ be the set of indices of the $(s-1)$-many columns following the columns of $A$. Next we let $q=\frac{1-p}{p}$, and let $F$ be the indices of the $\max\left\{\ceiling{qs}-s+1,\ceiling{q}+1\right\}$-many columns to the right of the columns indexed by $T$. (The significance of these values will become more apparent in the example in the next subsection.)

We define $B$ to be an $r\times|T|$ matrix whose $i$th row contains $(A_i\one-1)$-many $1$'s, followed by all $0$'s. We define $C$ to be an $r\times|F|$ matrix whose $i$th row contains $(\ceiling{qA_i\one}-A_i\one+1)$-many $1$'s, followed by all $0$'s.

The blocks $D$ and $E$ each are built from smaller blocks. For $i\leq|T|$, let $D_i$ be the $\binom{|F|}{\ceiling{q}}\times|T|$ matrix whose $i$th column consists of $1$'s, and all other columns consist of $0$'s. And let $E_0$ denote a $\binom{|F|}{\ceiling{q}}\times|F|$ matrix whose rows consist of the indicator functions of the subsets of $\{1,\ldots,|F|\}$ of size $\ceiling{q}$. Then we let
\[D=\begin{bmatrix}D_1 \\ \vdots \\ D_{|T|}\end{bmatrix}
  \text{, and }
  E=\begin{bmatrix}E_0 \\ \vdots \\ E_0\end{bmatrix}
\]
Here there are $|T|$-many copies of $E_0$ in $E$.

It is easy to see that the dimensions of the matrix $M$ are polynomial in the dimensions of the matrix $A$. (Recall here that $p$ is a fixed parameter of the construction.) Hence the construction is polynomial time (in $r\cdot c$).

\subsection{Example of the construction}
\label{ssec:example}

Before proving that the construction satisfies our requirements, let us give an example. Suppose that $p=\frac{1}{3}$ and we are given the $\zoe$ system $A\mathbf{x}=\one$ given by
\[\begin{bmatrix}
    1 & 1 & 0 & 0\\
    1 & 1 & 1 & 0 
  \end{bmatrix}
  \mathbf{x}=
  \begin{bmatrix}
    1\\1
  \end{bmatrix}
\]
Then we have $A_1\one=2$, $A_2\one=3$, $s=\max_iA_i\one=3$, and $q= \frac{1-p}{p} = 2$. Thus $T$ consists of the $s-1=2$ columns $\{5,6\}$, and $F$ consists of the $\ceiling{qs}-s+1=4$ columns $\{7,8,9,10\}$.

The block $B$ is thus a $2\times2$ matrix with $A_1\one-1=1$-many $1$'s in the first row and $A_2\one-1=2$-many $1$'s in the second row. The block $C$ is a $2\times4$ matrix with $\ceiling{qA_1\one}-A_1\one+1=3$-many $1$'s in the first row and $\ceiling{qA_2\one}-A_2\one+1=4$-many $1$'s in the second row.

Next, the blocks $D_1$ and $D_2$ are each $\binom{|F|}{\ceiling{q}} \times |T|$ which comes to $6\times2$. Block $D_1$ is a column of $1$'s followed by a column of $0$'s, and block $D_2$ is a column of $0$'s followed by a column of $1$'s.

Finally, the block $E_0$ is $\binom{|F|}{\ceiling{q}}\times|F|$ which comes to $6\times4$. The $6$ rows of $E_0$ correspond to the $6$ subsets of $\{1,\ldots,|F|\}=\{1,\ldots,4\}$ of size $\ceiling{q}=2$. The full matrix $M$ and the corresponding system $M\y=\nint{pM\one}$ are displayed in Figure~\ref{fig:example-construction}.

\begin{figure}[h]
  \[\begin{bmatrix}
      1 & 1 &   &   & \vline & 1 &   & \vline & 1 & 1 & 1 &  \\ 
      1 & 1 & 1 &   & \vline & 1 & 1 & \vline & 1 & 1 & 1 & 1\\\hline
        &   &   &   & \vline & 1 &   & \vline & 1 & 1 &   &  \\
        &   &   &   & \vline & 1 &   & \vline & 1 &   & 1 &  \\
        &   &   &   & \vline & 1 &   & \vline & 1 &   &   & 1\\
        &   &   &   & \vline & 1 &   & \vline &   & 1 & 1 &  \\
        &   &   &   & \vline & 1 &   & \vline &   & 1 &   & 1\\
        &   &   &   & \vline & 1 &   & \vline &   &   & 1 & 1\\
        &   &   &   & \vline &   & 1 & \vline & 1 & 1 &   &  \\
        &   &   &   & \vline &   & 1 & \vline & 1 &   & 1 &  \\
        &   &   &   & \vline &   & 1 & \vline & 1 &   &   & 1\\
        &   &   &   & \vline &   & 1 & \vline &   & 1 & 1 &  \\
        &   &   &   & \vline &   & 1 & \vline &   & 1 &   & 1\\
        &   &   &   & \vline &   & 1 & \vline &   &   & 1 & 1\\
    \end{bmatrix}
    \y=
    \begin{bmatrix}
      2\\3\\1\\1\\1\\1\\1\\1\\1\\1\\1\\1\\1\\1
    \end{bmatrix}
  \]
  \caption{The system $M\y=\nint{pM\one}$ constructed in our example.\label{fig:example-construction}}
\end{figure}

Having given the example construction, we briefly preview how the proof will play out in this case. The matrix $M$ is the incidence matrix for the collection $\{B_1,\ldots,B_{14}\}$ of subsets of $\{1,2,\ldots,10\}$, where the characteristic vector of $B_i$ is the $i$th row of $M$. The collection is splittable if and only if the system $M\textbf{y}=\nint{pM\one}$ has a binary solution. In our example the values on the right-hand side are $pM_1\one=2$, $pM_2\one=3$, and $pM_i\one=1$ for $i>2$.

Note that if $A\x=\one$, then $\x$ extends to a solution of $M\y=\nint{pM\one}$ by setting the components with indices in $T$ to be $1$ and the components with indices in $F$ to be $0$. Conversely, if $M\y=\nint{pM\one}$ then the components of $\y$ with indices in $T$ are forced to be $1$ and the components of $\y$ with indices in $F$ are forced to be $0$. Indeed, this can be seen by inspecting the $D$ and $E$ blocks of $M$, and is proved formally in the next subsection. Finally since the rows of the $B$ block have exactly one less $1$ than the rows of $A$, and since this number is also one less than the corresponding component on the right-hand side, such a $\y$ must restrict to a solution to $A\mathbf x=\one$.

\subsection{Proof of the main theorem}
\label{ssec:proof}

We now establish that the construction described above is indeed a reduction from $\zoe$ to $\psplit$. We will assume throughout that $0<p\leq\half$, since a collection is $p$-splittable if and only if it is $(1-p)$-splittable (simply take the complement of the witnessing splitter). To begin, we present a simple rounding calculation that will be used below.

\begin{lem}
  \label{lem:rounding}
  Let $0 < p \leq \half$ and $q=\frac{1-p}{p}$ as before. Then for any $m\in\N$ we have
  \[\nint{p(m+\ceiling{qm})}=m.
  \]
\end{lem}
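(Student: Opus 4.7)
The plan is to exploit the identity $p(1+q)=1$ that is forced by the definition $q=(1-p)/p$, and then control the rounding error introduced by replacing $qm$ with $\lceil qm\rceil$.

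First I would write $\lceil qm\rceil = qm + \epsilon$ for some $\epsilon\in[0,1)$. Then a direct substitution gives
\[
p(m+\lceil qm\rceil) \;=\; pm + p(qm+\epsilon) \;=\; pm(1+q) + p\epsilon \;=\; m + p\epsilon,
\]
using $p(1+q)=p+(1-p)=1$. So the quantity whose nearest integer we want is $m+p\epsilon$.

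The second and final step is to bound $p\epsilon$. Since $0\le\epsilon<1$ and $0<p\le\tfrac12$, we have $0\le p\epsilon<p\le\tfrac12$, so $m+p\epsilon$ lies in the interval $[m,m+\tfrac12)$. The nearest integer is therefore $m$. Note that the strict inequality $p\epsilon<\tfrac12$ means we never land on a half-integer, so the ambiguity in the definition of $\nint{\cdot}$ does not arise.

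I do not expect any real obstacles here — the entire content of the lemma is the algebraic identity $p(1+q)=1$ together with the observation that the ceiling only perturbs the value by less than $p\le\tfrac12$ after multiplication by $p$. The main thing to be careful about is confirming that the endpoint $p=\tfrac12$ does not produce a half-integer value, which is handled by the strict inequality $\epsilon<1$.
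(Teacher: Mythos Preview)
The proposal is correct and follows essentially the same argument as the paper: write $\lceil qm\rceil = qm + \epsilon$ with $\epsilon\in[0,1)$, use the identity $p(1+q)=1$ (the paper phrases it as $pqm = m(1-p)$) to reduce the expression to $m+p\epsilon$, and then observe that $p\epsilon<\tfrac12$.
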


\begin{proof}
  Let $\varepsilon = \ceiling{qm}-qm$. Then
  \begin{align*} 
    \nint{ p ( m + \ceiling{qm} ) }
  &= \nint{p(m+qm+\varepsilon)} \\
  &= \nint{pm+m(1-p)+p\varepsilon} \\
  &= \nint{m+p\varepsilon} \\
  &= m+\nint{p\varepsilon }
\end{align*}
Since $0\leq\varepsilon<1$ and $p\leq\half$, we have $p\varepsilon<\half$, which gives that the last quantity equals $m$ as desired.
\end{proof}

Next we calculate the values on the right-hand side of the system $M\y=\nint{pM\one}$.

\begin{lem}
  \label{lem:b}
  Let $0<p\leq\frac12$, let $A$ be an $r\times c$ matrix, and let $M$ be constructed as above. Then $\nint{pM_i\one}=A_i\one$ for all $i\leq r$, and $\nint{pM_i\one}=1$ for all $i>r$.
\end{lem}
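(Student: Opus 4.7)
The plan is to compute $M_i\one$ directly by summing the number of ones in each block of row $i$, and then apply Lemma~\ref{lem:rounding} to convert $\nint{pM_i\one}$ into the claimed value. The calculation splits into two cases based on whether row $i$ comes from the top band of $M$ (which involves blocks $A$, $B$, $C$) or the bottom band (blocks $0$, $D$, $E$).

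First I would handle the case $i \leq a_r$. By the construction in \S\ref{ssec:transform}, row $i$ contains $A_i\one$ ones from $A$, exactly $A_i\one - 1$ ones from $B$, and exactly $\ceiling{qA_i\one} - A_i\one + 1$ ones from $C$. Summing, the cross terms telescope and one obtains
\[
M_i\one \;=\; A_i\one + (A_i\one - 1) + (\ceiling{qA_i\one} - A_i\one + 1) \;=\; A_i\one + \ceiling{qA_i\one}.
\]
Applying Lemma~\ref{lem:rounding} with $m = A_i\one$ yields $\nint{pM_i\one} = A_i\one$, as required.

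Next I would handle the case $i > a_r$. Each such row comes from a horizontal strip of the form $[\,0 \mid D_j \mid E_0\,]$ for some $j \leq |T|$ and some row of $E_0$. By definition $D_j$ has a single $1$ in position $j$ of its width-$|T|$ row, and each row of $E_0$ is the indicator of a subset of $\{1,\dots,|F|\}$ of size $\ceiling{q}$. Hence
\[
M_i\one \;=\; 0 + 1 + \ceiling{q} \;=\; 1 + \ceiling{q \cdot 1}.
\]
Applying Lemma~\ref{lem:rounding} with $m = 1$ gives $\nint{pM_i\one} = 1$.

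I do not expect any genuine obstacle here: both halves are essentially bookkeeping that reduces to a single invocation of the rounding lemma. The only point that requires a little care is to confirm that the ``$+1$'' and ``$-1$'' adjustments in the widths of $B$ and $C$ combine cleanly so that $M_i\one$ takes exactly the form $m + \ceiling{qm}$ to which Lemma~\ref{lem:rounding} applies; this is what makes the proof immediate.
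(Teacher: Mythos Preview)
Your proposal is correct and follows essentially the same approach as the paper: compute $M_i\one$ block by block so that it takes the form $m+\ceiling{qm}$ (with $m=A_i\one$ for $i\le a_r$ and $m=1$ for $i>a_r$), and then invoke Lemma~\ref{lem:rounding}. The argument and the level of detail match the paper's proof almost line for line.
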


\begin{proof}
  First consider $i\leq r$. Then
  \begin{align*}
    \nint{pM_i\one}
    &=\nint{p ( A_i \one +B_i \one +C_i \one )}\\
    &=\nint{p\Big( A_i \one + \left(A_i \one -1\right) + \big(\ceiling{qA_i\one}-A_i \one +1 \big) \Big)}\\
    &=\nint{p( A_i \one + \ceiling{qA_i\one}) }
  \end{align*}
  By Lemma~\ref{lem:rounding}, the latter quantity is simply $A_i\one$, as claimed.

  Next consider $i>r$. Here we have
  \[\nint{p M_i \one} = \nint{ p( D_i \one + E_i \one)} = \nint{ p ( 1 + \ceiling{q} )}.
  \]
  Again using Lemma~\ref{lem:rounding}, the latter quantity is $1$, as desired.
\end{proof}

To commence with the body of the proof, we first show that if $A\x=\one$ has a solution, then the system $(M\y)_i=\nint{pM_i\one}$ has a solution. Given a solution $\x$ to $A\x=\one$, we extend $\x$ to a vector $\y$ by appending $|T|$-many $1$'s followed by $|F|$-many $0$'s. Then for $i\leq r$ we have
\[M_i\y = A_i \x + B_i \one + C_i \vec{0} = 1 + (A_i \one - 1) = A_i \one
\]
By Lemma~\ref{lem:b}, this is equal to $\nint{pM_i\one}$, as desired. On the other hand, for $i > r$ we have
\[M_i\y = D_i \one + E_i \vec{0} = D_i \one = 1
\]
(Here $D_i,E_i$ denote the $i$th row of $D,E$, not the $i$th block.) Again by Lemma~\ref{lem:b}, this is equal to $\nint{pM_i\one}$, as desired.

For the converse, we show that if $M\y=\nint{pM\one}$ has a solution then $A\x=\one$ has a solution. We make a series of claims about the structure of the solution $\y$ that will enable us to create from it a solution $\x$ to $A\x=\one$.

\begin{claim}
  \label{claim:disjoint}
  It is not the case that there are $j \in T$ and $k\in F$ such that both $y_j = 1$ and $y_k = 1$.
\end{claim}

\begin{claimproof}
  Suppose towards a contradiction that $j \in T$, $k \in F$ and $y_j = y_k = 1$. Recalling the definitions of $D$ and $E$, we can find a row index $i>r$ such that $M_i$ has a $1$ in its $j$th and $k$th columns. It follows that $M_i\y\geq2$, which contradicts the calculation from Lemma~\ref{lem:b} that $M_i\y=\nint{pM_i\one}=1$.
\end{claimproof}

\begin{claim}
  \label{claim:TF}
  If $0 < p < \half$, then for all indices $i \in T$, $y_i = 1$ and for all indices $j \in F$, $y_j = 0$.
\end{claim}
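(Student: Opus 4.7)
The plan centers on the lower block of rows of $M$ (those with index $i > a_r$), which by construction encode one equation
\[ y_j + \sum_{k \in S} y_k = 1 \]
for every pair consisting of a column $j \in T$ and a size-$\lceil q \rceil$ subset $S \subset F$. Two numerical facts will be used throughout: since $p < \tfrac12$ implies $q > 1$, we have $\lceil q \rceil \geq 2$, and by construction $|F| \geq \lceil q \rceil + 1$.

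First I would rule out the possibility that $y_j = 0$ for every $j \in T$. Under that assumption, the equations above collapse to $\sum_{k \in S} y_k = 1$ for every size-$\lceil q \rceil$ subset $S \subset F$. The slack $|F| > \lceil q \rceil$ lets me compare the equation for $S_0 \cup \{a\}$ with that for $S_0 \cup \{b\}$, where $S_0 \subset F \setminus \{a,b\}$ has size $\lceil q \rceil - 1$, forcing $y_a = y_b$. Ranging over all $a, b \in F$, the entries $y_k$ ($k \in F$) share a common value $c \in \{0,1\}$, so $\lceil q \rceil \, c = 1$, contradicting $\lceil q \rceil \geq 2$.

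Consequently, some $j_0 \in T$ satisfies $y_{j_0} = 1$. Invoking Claim~\ref{clm:disjoint} then forces $y_k = 0$ for every $k \in F$. Substituting this into the row equation for an arbitrary $j \in T$ and any $S$ yields $y_j = 1$, completing the claim.

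The main obstacle is the uniformity step that forces all $y_k$ ($k \in F$) to coincide; this is precisely where the "$+1$" in the defining formula $|F| = \max\{\lceil qs \rceil - s + 1,\ \lceil q \rceil + 1\}$ pays off, so I would want to double-check this inequality and the $\lceil q \rceil \geq 2$ estimate carefully (in particular that the strict inequality $p < \tfrac12$, rather than $p \leq \tfrac12$, is essential) before writing the argument up.
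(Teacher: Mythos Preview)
Your proof is correct and follows essentially the same strategy as the paper's: both exploit the lower-block row equations $y_j+\sum_{k\in S}y_k=1$ together with Claim~\ref{clm:disjoint} and the two numerical facts $\lceil q\rceil\ge 2$ and $|F|>\lceil q\rceil$. The only difference is in how the contradiction is extracted once all $y_i$ ($i\in T$) are assumed zero: you show by a swapping argument that the $y_k$ ($k\in F$) must all coincide, making $\lceil q\rceil\,c=1$ unsolvable, whereas the paper cases on whether exactly one or at least two of the $y_k$ equal~$1$ and exhibits a row with sum $0$ or $\ge 2$ respectively.
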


\begin{claimproof}
  Suppose towards a contradiction that there is a $j \in F$ such $y_j = 1$. By the previous claim, for all $i\in T$ we have $y_i=0$. If there is just one such $j\in F$ with $y_j=1$, then by construction of $E_0$ we can find a row $\ell>r$ such that the $j$th entry of $M_\ell$ is $0$. This implies that $M_\ell\y=0$, contradicting that $M_\ell\y=\nint{pM_\ell\one}=1$.

  On the other hand if there are two distinct $j,j'\in F$ with $y_j=y_{j'}=1$, then since $p<\half$ implies $q>1$, we can find a row $\ell>r$ such that the $j$th and $j'$th entries of $M_\ell$ are both $1$. This implies that $M_\ell\y\geq2$, again contradicting that $M_\ell\y=1$.

  Thus we have shown that $y_j=0$ for all $j\in F$. It follows from the construction of $D$ that $y_i=1$ for all $i\in T$.
\end{claimproof}

To continue the proof, assume first that $p<\half$. Then by Claim~\ref{claim:TF}, for all $i\in T$ we have $y_i=1$ and for all $j\in F$ we have $y_j=0$. Letting $\x$ denote the restriction of $\y$ to its first $c$ entries, for any $i\leq r$ we have $(M\y)_i=(A\x)_i+(A\one)_i-1$. By Lemma~\ref{lem:b} we also know that $(M\y)_i=(A\one)_i$. It follows that $A\x=\one$.

Next consider the case when $p=\half$. Then $q=1$ so both $D$ and $E$ have exactly one $1$ per row. It follows from Claim~\ref{claim:disjoint} that we either have
\begin{enumerate}
  \renewcommand{\theenumi}{(\Roman{enumi})}
  \renewcommand{\labelenumi}{(\Roman{enumi}).}
  \item $y_i = 1$ for all $i \in T$ and $y_j = 0$ for all $j \in F$, or \label{it:selectT}
  \item $y_i = 0$ for all $i \in T$ and $y_j = 1$ for all $j \in F$. \label{it:selectF}
\end{enumerate}
If \ref{it:selectT} holds, we can do as we did when $p < \half$, so we are done. Otherwise, if \ref{it:selectF} holds, let $\y' = \one - \y$. Then
\[ M_i\y' = M_i( \one - \y) = M_i \one - \nint{M_i \one p}. \]
We know that $A_i \one$ has the opposite parity of $A_i \one - 1 = B_i \one$, and that $C_i \one = 1$ when $p = \half$. Therefore, $M_i \one$ is even, so $M_i \one - \nint{M_i \one p} = \nint{M_i \one p}$, meaning that $M\y'=\nint{pM\one}$.

Thus, $\y'$ also corresponds to a valid splitter of $\B$, and since $y_i' = 1$ for all $i \in T$ we must also have that its first $c$ entries pick out exactly one $1$ per row of $A$ by the same argument as the case $p<\half$.  Therefore, taking $\x$ to be restriction of $\y'$ to its first $c$ entries, we once again have that $A\x=\one$.

This concludes the proof that the construction from $A$ of $M$ is polynomial time (in $r\cdot c$) reduction from $\zoe$ to $\psplit$.

\section{$p$-Splittability criteria and characterizations}
\label{sec:psplit}

The result of the previous section implies that it is hard (assuming $NP\neq P$) to find a general characterization of $p$-splittability. Nevertheless, in this section we provide several $p$-splittability criteria for special types of collections. Furthermore we completely characterize $p$-splittability for collections of at most three sets.

Before we begin our study, it is useful to introduce the following notation. For a collection $\B=\{B_1,\ldots,B_n\}$ and an element $x$, the \emph{multiplicity} of $x$ is the number $m_x$ of sets $B_i$ such that $x\in B_i$. Given a subcollection $\{B_{i_1}, \ldots, B_{i_k}\} \subset \B$, we define the associated \emph{Venn region} of $\B$ to be the set of elements $x$ that lie in precisely the sets $B_{i_1}, \ldots, B_{i_k}$ and in no other $B_j$. Venn regions corresond pictorially to regions of the Venn diagram formed by the sets $B_1, \ldots B_n$ (we shall illustrate this later in Figure~\ref{fig:venn-regions}). If $R$ is a Venn region associated to a subcollection of cardinality $m$, then all elements of $R$ have multiplicity $m$, so we also say that $R$ has multiplicity $m$.

In the following result, we will say that a sequence $\{t_i\}$ is a \emph{target sequence} for $\B$ if $0\leq t_i\leq|B_i|$ for $1 \leq i \leq n$. The target sequence $t_i$ is \emph{achievable} if there is a set $S$ such that $|S\cap B_i|=t_i$ for all $i$.

\begin{lem}
  \label{lem:parity}
  Let $\B = \{B_1, \ldots, B_n\}$ be a collection of sets and assume that for every $x\in\bigcup\B$ the multiplicity $m_x$ is divisible by $m$. If the target sequence $\{t_i\}$ is achievable, then $\sum t_i$ is divisible by $m$.
\end{lem}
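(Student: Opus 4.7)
The plan is to use a double-counting argument on the incidences between elements of the witnessing set $S$ and the sets in $\B$. Specifically, let $S$ be a set with $|S \cap B_i| = t_i$ for all $i$. I would count, in two ways, the number $N$ of pairs $(x, i)$ with $x \in S \cap B_i$.

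Counting by $i$ first gives $N = \sum_i |S \cap B_i| = \sum_i t_i$. Counting by $x$ first gives $N = \sum_{x \in S} |\{i : x \in B_i\}| = \sum_{x \in S} m_x$ (one may restrict attention to $x \in S \cap \bigcup \B$ since any other $x \in S$ contributes $0$). By the hypothesis, each $m_x$ with $x \in \bigcup \B$ is divisible by $m$, and therefore the sum $\sum_{x \in S \cap \bigcup \B} m_x$ is divisible by $m$ as well. Equating the two counts yields that $\sum_i t_i$ is divisible by $m$.

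Since the entire argument reduces to a one-line incidence count, there is no real obstacle; the only thing to be careful about is handling possible elements of $S$ that lie outside $\bigcup \B$, which is immediate because such elements contribute nothing to the sum $\sum_i |S \cap B_i|$.
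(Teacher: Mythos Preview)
Your proof is correct and is essentially identical to the paper's own argument: both compute $\sum_i t_i=\sum_i |S\cap B_i|=\sum_{x\in S} m_x$ and conclude divisibility by $m$ from the hypothesis on each $m_x$. Your extra remark about elements of $S$ outside $\bigcup\B$ is a harmless clarification the paper omits.
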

  
\begin{proof}
    Let $S$ be a set witnessing that $\{t_i\}$ is achievable. Then
  \[\sum_{1 \leq i\leq n} t_i=\sum_{1 \leq i\leq n}|S\cap B_i|=\sum_{x\in S}m_x \]
  By hypothesis, $m_x$ is divisible by $m$ for all $x$, so the right-hand side is also divisible by $m$.
\end{proof}

Since $\B$ is $p$-splittable if and only if the target sequence $t_i = \nint{p|B_i|}$ is achievable, the contrapositive of Lemma~\ref{lem:parity} provides a useful condition for showing that certain collections are not $p$-splittable. The converse of Lemma~\ref{lem:parity} is false in general: for a counterexample, take $\B = \{ \{1,2,3\}, \{1,4,6\}, \{2,5,6\}, \{3,4,5\} \}$. The multiplicity of every element of $\bigcup\B$ is 2, which divides the sum of the elements of the target sequence $\{3,3,3,1\}$; however, any set that contains all three elements of three of the sets in $\B$ contains all elements of $\bigcup\B$, so $\{3,3,3,1\}$ is not achievable.
Despite this, the converse of Lemma~\ref{lem:parity} does hold in the following very special case.

\begin{lem}
  \label{lem:n-1}
  Let $\B = \{B_1,\ldots, B_n\}$ be a collection such that for all $x \in \bigcup \B$, $m_x = n-1$. If $\sum \nint{p|B_i|}$ is divisible by $n-1$, then $\B$ is $p$-splittable.
\end{lem}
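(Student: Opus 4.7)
The plan is to observe that because every element has multiplicity exactly $n-1$, the only nonempty Venn regions are the $n$ regions $R_i$ consisting of the elements lying in every $B_j$ with $j\neq i$. Write $r_i:=|R_i|$, so that $|B_j|=\sum_{i\neq j}r_i = N - r_j$ where $N:=\sum_i r_i$. A candidate splitter $S$ is determined, up to the irrelevant choice of which elements of each region to include, by the counts $s_i:=|S\cap R_i|$, and the requirement $|S\cap B_k|=t_k:=\nint{p|B_k|}$ translates to the linear system $\sum_{i\neq k}s_i=t_k$ for $k=1,\ldots,n$. Letting $T:=\sum_i s_i$, this rewrites as $s_k=T-t_k$, and summing over $k$ yields $(n-1)T=\sum_k t_k$. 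Thus $T=\tfrac{1}{n-1}\sum_k t_k$ is a nonnegative integer, precisely by the divisibility hypothesis, and the candidate splitter is completely determined.

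The main step (and the only potentially delicate one) will be verifying $0\leq s_i\leq r_i$, so that $s_i$ elements can actually be selected from $R_i$. I would write $t_j = p|B_j|+\delta_j$ with $|\delta_j|\leq\tfrac12$, use $|B_j|=N-r_j$, and substitute into the formula above to obtain
\[
s_i = T - t_i = p r_i + \frac{\sum_{j\neq i}\delta_j - (n-2)\delta_i}{n-1}.
\]
The numerator is bounded in absolute value by $(n-1)/2 + (n-2)/2 = (2n-3)/2$, so the error term has absolute value at most $(2n-3)/(2(n-1))=1-\tfrac{1}{2(n-1)}<1$. Hence $pr_i - 1 < s_i < pr_i + 1$, and since $0\leq p\leq 1$ and $r_i\geq 0$ this forces $-1 < s_i < r_i+1$. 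Because $s_i$ is an integer (as $T$ and $t_i$ are), we conclude $0\leq s_i\leq r_i$.

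To conclude, I would pick any $S_i\subset R_i$ with $|S_i|=s_i$ and set $S=\bigcup_i S_i$; then $|S\cap B_k| = \sum_{i\neq k}s_i = T - s_k = t_k$ by construction, exhibiting $S$ as a $p$-splitter. The one subtlety I expect is the half-integer case of $\nint{p|B_j|}$: I would read the hypothesis as asserting that \emph{some} valid choice of floors and ceilings makes $\sum_j t_j$ divisible by $n-1$, and the construction then produces a splitter realizing exactly that choice.
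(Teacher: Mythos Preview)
Your proof is correct and follows essentially the same approach as the paper: both identify the regions $R_i$ (the paper calls their sizes $b_i$), solve the linear system to obtain $s_i = -t_i + \tfrac{1}{n-1}\sum_j t_j$, substitute the rounding-error expression for $t_j$ to get $s_i = pr_i + E$ with $|E|<1$, and conclude $0\leq s_i\leq r_i$ by integrality. Your handling of the half-integer case also matches the remark the paper makes immediately after the proof.
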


\begin{proof}
  Let $b_i = \left|\bigcup \B - B_i \right|$ be the size of the (unique) Venn region of multiplicity $n-1$ which is not contained in $B_i$. Further let $t_i=\nint{p|B_i|}$ be the target sequence. We wish to find values $\bar b_i$ such that $0\leq\bar b_i\leq b_i$ and $\sum_{j\neq i}\bar b_j=t_i$ for each $i$. Indeed, then we would be able to form a $p$-splitter by selecting $\bar b_i$ elements from each region $\bigcup \B - B_i$, and combining these elements into one set.
  
  To find the $\bar b_i$, the coefficient matrix of the system $\sum_{j\neq i}\bar b_j=t_i$ is square and invertible, and an elementary calculation using Gaussian elimination yields the unique solution:
  \begin{equation}
    \label{eq:barbi}
    \bar b_i=\frac{1}{n-1} \left(-(n-2)t_i+\sum_{j\neq i} t_j\right)\text{.}
  \end{equation}
  Note that $\bar b_i$ is always an integer, because the above expression is equal to 
  \[ \frac{1}{n-1}\left(-(n-1) t_i+\sum_j t_j\right) = -t_i + \frac{1}{n-1} \sum_{j} t_j \]
  and $\sum t_j$ is divisible by $n-1$ by hypothesis. Hence it remains only to establish that $0\leq\bar b_i\leq b_i$.

  For this, note that $t_i=\nint{p|B_i|} = \nint{\sum_{j\neq i}pb_j} = \epsilon_i+\sum_{j\neq i}pb_j$ where $|\epsilon_i|\leq\frac12$. Substituting this expression in for every $t_i$ in Equation~\eqref{eq:barbi}, we note that $pb_i$ occurs $n-1$ times in the parentheses while all other $pb_j$ occur $n-2$ times negatively and $n-2$ times positively. Thus, the $p b_j$ cancel, leaving us just with $pb_i$ and error terms as follows:
  \begin{equation}
    \label{eq:barbi2}
    \bar b_i=pb_i+\frac{1}{n-1}\left(-(n-2)\epsilon_i+\sum_{j\neq i}\epsilon_j\right)\text{.}
  \end{equation}
  There are $2n-3$ many $\epsilon$ terms in the parentheses, so we can conclude that $\bar b_i=pb_i+E$ where $|E|<1$. Since $0\leq pb_i\leq b_i$ and all of $0,b_i,\bar b_i$ are integers, it follows that $0\leq\bar b_i\leq b_i$ too.
\end{proof}

We are now ready to begin our classification of $p$-splittability for collections of size $\leq 3$. We begin with the simple case of just two sets, because it helps motivate some of the steps for the three-set case below.

\begin{thm}
  Every collection of two sets is $p$-splittable for every $0 \leq p \leq 1$.
\end{thm}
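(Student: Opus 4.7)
The plan is to decompose via Venn regions and reduce splittability to an interval-nonemptiness check. Let $a = |B_1 \setminus B_2|$, $b = |B_2 \setminus B_1|$, and $c = |B_1 \cap B_2|$, and let $R_a, R_b, R_c$ denote the corresponding regions. Any candidate splitter $S$ is specified, up to the immaterial choice of which particular representatives to take from each region, by the triple $(s_a, s_b, s_c) = (|S \cap R_a|, |S \cap R_b|, |S \cap R_c|)$. The $p$-splittability condition then becomes: find integers $s_a \in [0,a]$, $s_b \in [0,b]$, $s_c \in [0,c]$ with $s_a + s_c = t_1$ and $s_b + s_c = t_2$, where $t_i$ is a valid rounding of $p|B_i|$ (that is, $\nint{p|B_i|}$ when unambiguous and otherwise either $\lfloor p|B_i|\rfloor$ or $\lceil p|B_i|\rceil$).

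Next, I would parameterize by $s_c$: the two linear equations force $s_a = t_1 - s_c$ and $s_b = t_2 - s_c$, so it suffices to find an integer $s_c$ in the interval
\[
  I = [\max(0,\, t_1-a,\, t_2-b),\ \min(c,\, t_1,\, t_2)]\text{.}
\]
Since the endpoints of $I$ are integers, $I$ contains an integer if and only if it is nonempty. Of the nine pairwise ordering inequalities that nonemptiness requires, seven are immediate: four of the form $0 \leq t_i \leq |B_i|$ follow from $0 \leq p \leq 1$, and three are completely trivial (e.g.\ $0\leq c$). The genuine content reduces to the pair
\[
  t_1 - t_2 \leq a \qquad \text{and} \qquad t_2 - t_1 \leq b\text{.}
\]

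Finally, I would verify this pair using the standard rounding bound $-\half \leq t_i - p|B_i| \leq \half$, which gives $(t_1 - t_2) - p(a-b) \in [-1, 1]$. Since $p \leq 1$, $p(a-b) \leq \max(a-b, 0) \leq a$, and symmetrically $p(b-a) \leq b$. Thus after rounding, $t_1 - t_2 \leq a$ holds except possibly when the $\pm 1$ slack in the bound is fully realized, which requires both $p|B_1|$ and $p|B_2|$ to be half-integers and rounded in opposite directions. In this doubly-half-integer case the convention gives us flexibility: $p(a-b) = p|B_1| - p|B_2|$ is itself an integer, and picking $t_1, t_2$ with matching roundings (both floors, say) yields $t_1 - t_2 = p(a-b) \leq a$ and $t_2 - t_1 = p(b-a) \leq b$ exactly. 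The main subtlety is this coordinated use of the half-integer convention: the matching must be made consistently so that both target inequalities survive simultaneously.
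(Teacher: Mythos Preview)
Your proposal is correct and takes essentially the same approach as the paper: decompose into Venn regions, reduce to integer constraints, and verify feasibility via rounding-error bounds of size at most $1$. The only cosmetic difference is that the paper fixes a specific value for the intersection count (namely $\bar b=\nint{pb}$) and checks that the induced $\bar a_i$ land in range, whereas you parameterize by $s_c$ and show the feasible interval $I$ is nonempty---two presentations of the same computation.
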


\begin{proof}
  Let $\B = \{B_1, B_2\}$ be a given two-set collection. Replacing $p$ with $1-p$ if necessary, suppose that $p\leq\frac12$. Fix the following notation for the sizes of the regions of $\B$: $a_1=|B_1\cap B_2^c|$, $a_2=|B_1^c\cap B_2|$, and $b=|B_1\cap B_2|$ (see Figure~\ref{fig:venn-regions}). Next let $t_1=\nint{p(a_1+b)}$ and $t_2=\nint{p(a_2+b)}$ denote the target cardinalities for $S\cap B_1$ and $S\cap B_2$ for a $p$-splitter $S$. To show $\B$ is $p$-splittable it suffices to find integers $\bar a_i,\bar b$ such that (i) $0\leq\bar a_i\leq a_i$, (ii) $0\leq\bar b\leq b$, and (iii) $\bar a_i+\bar b=t_i$.
  
  \begin{figure}
    \begin{tikzpicture}
      \draw (0,0) circle (1);
      \draw (1,0) circle (1);
      \node[anchor=east] at (-1,0) {$B_1$};
      \node[anchor=west] at (2,0) {$B_2$};
      \node at (-.5,0) {$a_1$};
      \node at (1.5,0) {$a_2$};
      \node at (.5,0) {$b$};
    \end{tikzpicture}
    \hspace{.5in}
    \begin{tikzpicture}[rotate=120]
      \draw (90:.6) circle (1);
      \draw (210:.6) circle (1);
      \draw (330:.6) circle (1);
      \node at (90:1.9) {$B_1$};
      \node at (210:1.9) {$B_2$};
      \node at (330:1.9) {$B_3$};
      \node at (90:1) {$a_1$};
      \node at (210:1) {$a_2$};
      \node at (330:1) {$a_3$};
      \node at (-90:.7) {$b_1$};
      \node at (-210:.7) {$b_3$};
      \node at (-330:.7) {$b_2$};
      \node at (0:0) {$c$};
    \end{tikzpicture}
    \caption{At left: sizes of the Venn regions of a two-element collection $\{B_1,B_2\}$. At right: sizes of the Venn regions of a three-element collection $\{B_1,B_2,B_3\}$.\label{fig:venn-regions}}
  \end{figure}

  For this let $\bar b=\nint{pb}$ and $\bar a_i=t_i-\bar b$ so that (ii) and (iii) are clearly satisfied. Of course the definitions of both $t_i$ and $\bar b$ may be ambiguous; in such cases we ensure that ($\star$) if $a_i=0$ then we choose $\bar a_i=0$ too. (This is always possible by choosing the same rounding for both $\bar b$ and $t_i$.)

  To see that (i) is satisfied, write $\varepsilon=\bar b-pb$ for the rounding error in computing $\bar b$ and $\epsilon_i=t_i-p(a_i+b)$ for the rounding error in computing $t_i$. Then the definitions of $\bar b$ and $\bar a_i$ easily imply that
  \[\bar a_i=pa_i+(\epsilon_i-\varepsilon)\text{.}
  \]
  Since $|\epsilon_i|\leq\frac12$ and $|\varepsilon|\leq\frac12$ we know that $|\epsilon_i-\varepsilon|\leq1$. Assuming $a_i>0$ the above equation gives $-1<\bar a_i<a_i+1$, and since $\bar a_i$ and $a_i$ are integers, (i) is satisfied. On the other hand if $a_i=0$ then by assumption ($\star$) we have $\bar a_i=0$ too, so (i) is clearly satisfied.
\end{proof}

To state our results for three sets, we extend the notation from the previous proof. For a three-set collection $\B=\{B_1,B_2,B_3\}$ we let $a_i$ denote the number of multiplicity~1 elements of $B_i$, let $b_i$ denote the number of multiplicity~$2$ elements \emph{not} in $B_i$, and let $c$ denote the number of multiplicity~3 elements (see Figure~\ref{fig:venn-regions}). As in the previous proof we let $t_i=\nint{p|B_i|}$ be the targets and $\epsilon_i=t_i-p|B_i|$ be the rounding error. Finally we set the values $\rho_i=-\epsilon_i+\sum_{j\neq i}\epsilon_j$.

\begin{lem}
  \label{lem:psplit-core}
  Assume that $p\leq\frac12$, and let $\B=\{B_1,B_2,B_3\}$ be given. Also assume there are no multiplicity~1 elements, that is, all $a_i=0$. Then $\B$ is not $p$-splittable if and only if $\sum t_i$ is odd and at least one of the conditions holds:
  \begin{enumerate}
  \item $c=0$; or
  \item $pc<\frac12$ and some $pb_i+\frac12(pc-1+\rho_i)<0$.
  \end{enumerate}
\end{lem}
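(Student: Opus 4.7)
The plan is to parameterize $p$-splitters by the single integer $\bar c = |S \cap B_1 \cap B_2 \cap B_3|$, reduce the problem to finding an admissible $\bar c$, and then argue by cases on the parity of $\sum t_l$ and the size of $pc$.

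Writing $\bar b_i = |S \cap B_j \cap B_k \cap B_i^c|$ for $\{i,j,k\}=\{1,2,3\}$, a $p$-splitter corresponds to nonnegative integers $\bar b_i \le b_i$ and $\bar c \le c$ satisfying $\bar b_j + \bar b_k + \bar c = t_i$ for each cyclic triple. Solving this $3 \times 4$ linear system yields
\[
  \bar b_i = pb_i + \tfrac{1}{2}(pc + \rho_i - \bar c),
\]
and summing the three equations gives $2(\bar b_1+\bar b_2+\bar b_3) + 3\bar c = \sum_l t_l$, whence the parity constraint $\bar c \equiv \sum_l t_l \pmod 2$. Hence $\B$ is $p$-splittable if and only if there is an integer $\bar c \in [0,c]$ of the appropriate parity for which all $\bar b_i \in [0, b_i]$.

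For the ``if'' direction, assume $\sum t_l$ is odd, so $\bar c$ must be odd. If $c = 0$ the only candidate $\bar c = 0$ has wrong parity, so no splitter exists. If $pc < \tfrac12$ and $\bar b_{i_0}|_{\bar c = 1} < 0$ for some $i_0$, then since the displayed formula shows that $\bar b_i$ is a strictly decreasing function of $\bar c$, every odd $\bar c \geq 1$ gives $\bar b_{i_0} < 0$; hence no valid $\bar c$ can be chosen.

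For the converse I would exhibit a splitter in each of the remaining cases by selecting $\bar c$ appropriately: the even integer in $[0,c]$ closest to $pc$ when $\sum t_l$ is even; the value $\bar c = 1$ when $\sum t_l$ is odd, $pc < \tfrac12$, and all lower bounds $\bar b_i|_{\bar c=1} \ge 0$ hold; and the odd integer in $[0,c]$ closest to $pc$ when $\sum t_l$ is odd and $pc \geq \tfrac12$. In each subcase, verifying that $0 \le \bar b_i \le b_i$ reduces to a careful case analysis using the bound $|\rho_i| \le \tfrac32$, the integrality of $\bar b_i$ enforced by the parity constraint, and -- when some $p|B_l|$ is a half-integer -- the flexibility to choose the rounding of $t_l$ granted by the convention.

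The main obstacle will be verifying the upper bounds $\bar b_i \le b_i$ in the boundary subcases, particularly when some $b_i = 0$ (which forces $\bar b_i = 0$ exactly and thereby constrains the choice of $\bar c$) or when $pc$ is close to $\tfrac12$, where $\rho_i$ can attain extreme values. Managing the interaction between the parity requirement and the half-integer convention to ensure that a viable $\bar c$ always exists in the splittable cases will form the technical core of the proof; the remainder is routine rounding arithmetic.
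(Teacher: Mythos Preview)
Your proposal is correct and follows essentially the same route as the paper: parameterize by $\bar c$, derive the formula $\bar b_i = pb_i + \tfrac12(pc+\rho_i-\bar c)$ and the parity constraint $\bar c\equiv\sum t_l\pmod 2$, dispose of the unsplittable cases by monotonicity in $\bar c$, and in the converse select $\bar c$ near $pc$ with the correct parity and control the error via $|\rho_i|<\tfrac32$ together with the half-integer rounding freedom. The only organizational difference is that the paper first proves that $\bar c=\nint{pc}$ \emph{always} yields valid $\bar b_i$, and then shows that at least one of $\nint{pc}\pm1$ does as well (via an interval argument on the $\rho_i$), whereas you propose to go directly to ``the integer of the required parity closest to $pc$''; be aware that ``closest'' is not automatically the one that works, so in executing your sketch you will end up proving the same two-choice claim the paper uses.
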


\begin{proof}
  First observe that $\B$ is $p$-splittable if and only if one can find values $\bar b_i$ and $\bar c$ such that $0\leq \bar b_i\leq b$, $0\leq\bar c\leq c$, and $\bar c+\sum_{j\neq i}\bar b_i=t_i$. Indeed, such values of $\bar b_i$ and $\bar c$ correspond to the number of elements of the corresponding Venn regions needed to make a $p$-splitter.
  
  Assuming one has chosen a value for $\bar c$, we can again use Gaussian elimination to find that the system of equations $\bar c+\sum_{j\neq i}\bar b_i=t_i$ has the unique solution:
  \begin{equation}
    \label{eq:psplit-initial-solution}
    \bar b_i=\frac12\left(-t_i+\sum_{j\neq i}t_j-\bar c\right)\text{.}
  \end{equation}
  At this point we can observe that in order to achieve integer values of $\bar b_i$, one must choose the value $\bar c$ to have the same parity as $\sum t_j$. Next we substitute $t_j=pc+p\sum_{k\neq j}b_k+\epsilon_j$ to rewrite the above equation as
  \begin{equation}
    \label{eq:psplit-solution}
    \bar b_i=pb_i+\frac12\left(pc-\bar c+\rho_i\right)\text{.}
  \end{equation}
  
  Now assume that $\sum t_i$ is odd and that (a) or (b) holds. If (a) holds, then Lemma~\ref{lem:parity} demonstrates that $\B$ is unsplittable. Thus assume that (b) holds. Since $\sum t_i$ is odd, we cannot choose $\bar c$ to be of even parity and in particular cannot choose $\bar c=0$. Condition~(b) together with Equation~\eqref{eq:psplit-solution} implies that any positive value of $\bar c$ results in $\bar b_i<0$. Thus $\B$ is once again unsplittable.

  For the converse we will need to show that if either $\sum t_i$ is even or both (a) and (b) are false, then $\B$ is splittable.
  
  \begin{claim}
    \label{claim:pc}
    The choice $\bar c=\nint{pc}$ always ensures that $0\leq\bar b_i\leq b_i$.
  \end{claim}
    
  \begin{claimproof}
    Note first that this choice implies $|pc-\bar c|\leq\frac12$. Moreover we can always assume $|\rho_i|<\frac32$, since otherwise all $|\epsilon_j|=\frac12$ and we would be able to change the rounding of the targets $t_i$ (even while preserving the parity of $\sum t_i$). Thus Equation~\eqref{eq:psplit-solution} implies that $\bar b_i=pb_i+E$ where $E<1$, and we can therefore argue as in the proof of Lemma~\ref{lem:n-1} to complete the claim.
  \end{claimproof}

  Of course we cannot necessarily choose $\bar c=\nint{pc}$, since this may not have the same parity as $\sum t_j$. Thus we need the following.
  
  \begin{claim}
    \label{claim:c+-}
    One of the two choices $\bar c_-:=\nint{pc}-1$ or $\bar c_+:=\nint{pc}+1$ ensures that $0\leq\bar b_i\leq b_i$.
  \end{claim}
  
  \begin{claimproof}
    The two choices result in values $pc-\bar c_-$ and $pc-\bar c_+$. These values differ by $2$ and have absolute value $\leq\frac32$. Meanwhile the $\rho_i$ lie in some interval of length $\leq2$ which is contained in $(-\frac32,\frac32)$. It is straightforward to conclude that either all $|pc-\bar c_-+\rho_i|<2$ or all $|pc-\bar c_++\rho_i|<2$. Thus one of two choices $\bar c=\bar c_-$ or $\bar c_+$ gives values $\bar b_i=pb_i+E'$ where $E'<1$, and we are again done as in the previous claim.
  \end{claimproof}

  Now assume that $\sum t_i$ is even. If $\nint{pc}$ is even then by Claim~\ref{claim:pc} we have that $\bar c=\nint{pc}$ leads to a solution. And if $\nint{pc}$ is odd then we always have $0\leq\nint{pc}\pm1\leq c$ (here we are using $p\leq\frac12$). Thus by Claim~\ref{claim:c+-} one of the choices $\bar c=\nint{pc}\pm1$ leads to a solution as well.

  Finally assume both (a) and (b) are false. Since (a) is false and $p\leq\frac12$ we have that $\nint{pc}+1\leq c$. On the other hand since (b) is false we either have (i) $pc\geq\frac12$, or else (ii) $pc<\frac12$ and all $pb_i+\frac12(pc-1+\rho_i)\geq0$. In case (i) we have $0\leq\nint{pc}\pm1\leq c$, which we have previously shown implies $\B$ is splittable. In case (ii) we have $\nint{pc}+1=1$ and moreover that the choice $\bar c=1$ leads to a valid solution for all $\bar b_i$. This concludes the proof.
\end{proof}

We remark that in the previous lemma, if any of the $p|B_i|$ is a half-integer, then $\B$ is $p$-splittable. Indeed, in this case we can select the set target $t_i$ to make $\sum t_i$ even. We also note that if case (b) of the lemma holds, then it is not difficult to see there is a unique $i$ such that $2pb_i+pc-1+\rho_i<0$, and moreover that $\rho_i<-1$, that $\epsilon_i>0$, and that the other two $\epsilon_j<0$.

In the next result we consider the case when a collection $\B$ of three sets has elements of multiplicity~$1$.

\begin{lem}
  \label{lem:psplit-monofolds}
  Assume that $p\leq\frac12$, and let $\B=\{B_1,B_2,B_3\}$ be a given collection with $a_i,b_i,c$ as in Figure~\ref{fig:venn-regions}. Then provided at least one of the $a_i$ is sufficiently large, $\B$ is $p$-splittable.
\end{lem}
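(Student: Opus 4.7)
Without loss of generality we may assume $a_1 = \max(a_1, a_2, a_3)$ after relabelling, so $a_1$ is the Venn region assumed to be sufficiently large; we show $\B$ is $p$-splittable whenever $a_1 \geq N$ for a constant $N = N(p)$ to be specified.

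The plan is to construct a $p$-splitter directly by selecting approximately a $p$-fraction of each Venn region and using the large $a_1$ region to absorb any rounding discrepancies. I would begin with the tentative choices $\bar c := \nint{pc}$, $\bar b_i := \nint{pb_i}$ for $i = 1,2,3$, $\bar a_2 := \nint{pa_2}$, $\bar a_3 := \nint{pa_3}$, and then define
\[ \bar a_1 := t_1 - \bar b_2 - \bar b_3 - \bar c, \]
so that the $B_1$ equation is satisfied by construction. A rounding calculation in the spirit of Lemma~\ref{lem:rounding} gives $\bar a_1 = pa_1 + \xi$ with $|\xi| \leq 2$, so $\bar a_1 \in [0, a_1]$ once $a_1 \geq 2/\min(p, 1-p)$.

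The remaining task is to zero out the integer residuals $\Delta_i := \bar a_i + \bar b_j + \bar b_k + \bar c - t_i$ for $i = 2,3$, each of absolute value at most $2$ by the same rounding considerations. Several ``gauge moves'' are available: incrementing $\bar b_1$ shifts $(\Delta_2,\Delta_3)$ by $(+1,+1)$ and leaves equation~1 untouched; incrementing $\bar a_2$ or $\bar a_3$ shifts only $\Delta_2$ or $\Delta_3$; and incrementing any of $\bar b_2, \bar b_3$, or $\bar c$ shifts a residual while forcing a compensating decrement in $\bar a_1$, which is admissible thanks to the slack in $a_1$. A short case analysis confirms that these moves suffice to drive $(\Delta_2, \Delta_3)$ to $(0,0)$ from any starting configuration in $\{-2, \ldots, 2\}^2$, provided the relevant Venn regions $b_1, b_2, b_3, c, a_2, a_3$ are not themselves too small.

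The main obstacle is the finite list of degenerate configurations in which some of these other regions are too small (zero or one) to accommodate the required adjustments. I would handle these individually: for instance, when $b_1 = 0$ the equations for $B_2$ and $B_3$ decouple from $\bar b_1$ and can be balanced using $\bar c$ alone; when $a_2 = a_3 = 0$ the residuals must be redistributed entirely among $\bar c$ and the $\bar b_i$; and if several regions vanish simultaneously the problem reduces to splitting a much smaller sub-collection, which is handled by direct verification. The threshold $N(p)$ is then chosen large enough that $\bar a_1$ remains in $[0, a_1]$ throughout the adjustments used in both the generic case and the degenerate cases.
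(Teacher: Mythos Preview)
Your approach is genuinely different from the paper's. The paper first strips off all multiplicity-$1$ elements to obtain $\B^{(0)}$, observes that adding multiplicity-$1$ elements back can only preserve splittability, and then only needs to argue in the case where $\B^{(0)}$ is $p$-unsplittable. At that point Lemma~\ref{lem:psplit-core} has already done the heavy lifting: it forces $\sum t_i$ to be odd and $pc<\tfrac12$, and Equation~\eqref{eq:psplit-solution} with $\bar c=0$ produces half-integer values $\bar b_i$ that are within $\tfrac12$ of the box $[0,b_i]$, with at most one boundary violation. From that sharp structural picture the final fix-up using $a_1$ is only a few lines (and in fact shows that a quite small $a_1$ already suffices).

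Your direct construction---round each region, absorb the error in $\bar a_1$, then repair the two residuals $\Delta_2,\Delta_3$ by gauge moves---is a reasonable alternative and can be completed, but the emphasis is inverted. You treat the ``generic'' case as the one where all seven regions are comfortably large and defer the small-$b_i$, small-$c$, small-$a_2,a_3$ situations to a ``finite list of degenerate configurations.'' But the lemma gives no lower bound on any region except the single $a_1$, so those ``degenerate'' configurations are the entire content of the statement, not a corner case. Your sketches for them (``when $b_1=0$ balance with $\bar c$ alone,'' ``when several regions vanish, verify directly'') are not wrong, but they are where all the real work lies, and carrying them out amounts to rediscovering inside this proof much of the case analysis that Lemma~\ref{lem:psplit-core} has already packaged. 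The paper's reduction buys that lemma for free; your route would have to reproduce it.
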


\begin{proof}
  First let $\B^{(0)}$ be the collection obtained from $\B$ by removing all elements of multiplicity~$1$. For the rest of the proof, let $b_i,c,t_i,\rho_i$ be as in the previous lemma, \emph{for the collection} $\B^{(0)}$.

  Suppose first that $\B^{(0)}$ is $p$-splittable. Then $\B$ is splittable too; in fact we can show that any splittable collection remains splittable after adding elements of multiplicity~$1$. To see this, it suffices to show it when we add just one element $a$ of multiplicity~$1$ to some set $B_i$. Now if adding $a$ raises the value of $t_i$ by $1$, then we include $a$ in the splitter; otherwise we would exclude $a$ from the splitter.

  Next suppose that $\B^{(0)}$ is $p$-unsplittable. Then by Equation~\eqref{eq:psplit-initial-solution} we can ``split'' the collection $\B^{(0)}$ by setting $\bar c=0$ and $\bar b_i=\frac12\left(-t_i+\sum_{j\neq i}t_j\right)$. However these choices of $\bar b_i$ will be half-integers, and need not satisfy $0\leq\bar b_i\leq b_i$.

  \begin{claim}
    \label{claim:barbi}
    For all $i$ we have $-\frac12\leq\bar b_i\leq b_i+\frac12$. Moreover there is at most one $i$ such that $\bar b_i=-\frac12$ or $\bar b_i=b_i+\frac12$.
  \end{claim}
  
  \begin{claimproof}  
    The first statement follows directly from Equation~\eqref{eq:psplit-solution}, together with $\bar c=0$ and $pc<\frac12$.

    For the second statement, our definition of $\bar b_i$ implies that the only possible contrary case is when two of the $\bar b_i$ have errors at opposite extremes, say $\bar b_2=-\frac12$ and $\bar b_3=b_3+\frac12$. We now show this implies that $b_3=0$. Indeed, Equation~\eqref{eq:psplit-solution} for $\bar b_2$ says $-\frac12=pb_2+\frac12(pc+\rho_2)$ and this implies $\rho_2<-1$. Since all $\rho_i$ lie within an interval of length $2$, it follows that $\rho_3<1$. Then Equation~\eqref{eq:psplit-solution} for $\bar b_3$ says $b_3+\frac12=pb_3+\frac12(pc+\rho_3)$. Now $p\leq\frac12$, $pc<\frac12$, and $\rho_3<1$ all together imply $b_3=0$.

    Using $b_3=0$, we obtain in particular that $t_3\geq t_2$. On the other hand Equation~\eqref{eq:psplit-initial-solution} for $\bar b_2$ says $-1=t_1-t_2+t_3$, and this implies $t_2>t_3$. This is a contradiction, and completes the proof of the claim.
  \end{claimproof}

  Now we can finish the proof as follows. Let $\B^{(1)}$ be the collection obtained from $\B$ by removing just the elements of $B_2$ and $B_3$ of multiplicity~$1$. That is, $\B^{(1)}$ is obtained by zeroing out $a_2$ and $a_3$. We will show that if $a_1$ is sufficiently large, then $\B^{(1)}$ is splittable.

  For this, we will use the notation $t_1^{(1)}$ for the target value of $B_1$ as it would be defined for $\B^{(1)}$ (or equivalently for $\B$). Thus in particular $t_1^{(1)}\geq t_1$. In the next paragraph we will only have need of small values of $a_1$, so that we need only consider the cases $t_1^{(1)}=t_1$ and $t_1^{(1)}=t_1+1$.

  Claim~\ref{claim:barbi} implies that at least one of the two triples $\{\bar b_1-\frac12,\bar b_2+\frac12,\bar b_3+\frac12\}$ or $\{\bar b_1+\frac 12,\bar b_2-\frac12,\bar b_3-\frac12\}$ lies within the desired bounds $[0,b_1],[0,b_2],[0,b_3]$. In the first case if $a_1$ is large enough that $t_1^{(1)}=t_1+1$ then the triple yields a valid splitting. In the second case if $a_1=1$ and $t_1^{(1)}=t_1$ then the triple extends to a valid splitting by selecting the single element of $a_1$. And if $a_1\geq2$ and $t_1^{(1)}=t_1+1$ then the triple extends to a valid splitting by selecting two elements from $a_1$.

  Thus we have shown in each case that there exists a value of $a_1$ that results in $\B^{(1)}$ being splittable. By the argument from the second paragraph, any larger value of $a_1$ will also result in $\B^{(1)}$ being splittable. Again using the argument from the second paragraph, this always implies $\B$ is splittable.
\end{proof}

The above lemma may seem natural, since intuitively the presence of elements of multiplicity~$1$ makes it easier to find a splitter. However the analogous result is false for collections of four or more sets. Indeed if $\{B_1,B_2,B_3\}$ is an unsplittable collection, then we can create unsplittable collections $\{B_1,B_2,B_3,B_4\}$ where the Venn region of multiplicity~$1$ given by $B_1^c\cap B_2^c\cap B_3^c\cap B_4$ is as large as we like.


\section{$\frac12$-splittability criteria and characterizations}
\label{sec:12split}

In the previous section, we examined $p$-splittability criteria for arbitrary $p$. In this section we specialize to the important case $p=\frac12$. After providing another very general lemma, we use it to give a complete characterization of splittability for collections of at most four sets, under the assumption that an exhaustive search by computer is implemented correctly. Throughout this section, the term splittability will always refer to $\frac12$-splittability.

The following result, while quite simple, is useful for converting our understanding of collections with few elements into more general theorems.

\begin{lem}[Reduction Lemma]
  \label{lem:reduction}
  Let $\B$ be a given collection, and let $\B'$ be a collection obtained from $\B$ by adding an even number of elements to any of its Venn regions. Then if $\B$ is splittable, so is $\B'$.
\end{lem}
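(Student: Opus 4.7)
The plan is to reduce to the case where we add exactly two elements to a single Venn region, since iterating this case covers the general one (any even increment to any region, and successively to multiple regions). So assume $\B'$ is obtained from $\B$ by adding two new elements $x,y$ to a single Venn region $R$ indexed by some $I \subseteq \{1,\ldots,n\}$.

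Given a splitter $S$ of $\B$, I would define $S' := S \cup \{x\}$, i.e., include exactly one of the two new elements in the new splitter. The claim is that $S'$ is a splitter for $\B'$. For each $i$, I check $|S' \cap B_i'|$ against the target $\nint{|B_i'|/2}$ in two cases. If $i \notin I$, then $B_i' = B_i$ and $x,y \notin B_i'$, so $|S' \cap B_i'| = |S \cap B_i|$ and the target is unchanged, so the splitter condition is inherited directly from $S$. If $i \in I$, then $|B_i'| = |B_i| + 2$ and $x \in B_i'$ (but $y$ is not in $S'$), so $|S' \cap B_i'| = |S \cap B_i| + 1$, while the target shifts from $\nint{|B_i|/2}$ to $\nint{|B_i|/2} + 1 = \nint{|B_i'|/2}$ (the floor/ceiling convention is preserved since $|B_i|$ and $|B_i'|$ have the same parity). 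So the splitter condition again holds.

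The main (and essentially only) thing to be a little careful about is the half-integer case, where $\nint{\cdot}$ is multi-valued. Since $|B_i|$ and $|B_i'|=|B_i|+2$ share parity, the floor/ceiling choice that was valid for $B_i$ remains valid for $B_i'$ after the shift by $1$, so no ambiguity causes trouble. This is really the only subtle point; otherwise the argument is a direct calculation. Finally, iterating the two-element construction handles adding $2k$ elements to a single region, and iterating across regions handles the fully general statement, completing the proof.
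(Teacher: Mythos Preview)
Your proof is correct and takes essentially the same approach as the paper: the paper also extends a splitter $S$ of $\B$ to a splitter $S'$ of $\B'$ by including exactly half of the new elements added to each Venn region. The only cosmetic difference is that the paper handles all the added elements at once rather than reducing to the two-element case by iteration, and it omits the parity check on $|B_i|$ that you spell out.
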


\begin{proof}
  If $S$ is a splitter for $\B$, then we can construct a splitter $S'$ for $\B'$ as follows. Begin by putting all the elements of $S$ into $S'$. Then for each Venn region $R$ of $\B$ and corresponding Venn region $R'$ of $\B'$, put half of the elements of $R'\setminus R$ into $S'$. It is easy to see that $S'$ is a splitter for $\B'$.
\end{proof}

Before stating our characterization of splittability for configurations with four sets, we review the known characterization of splittability for configurations with three or fewer sets (see \cite{reu2014}).

\begin{prop}
  \label{prop:oddoddodd}
  Any collection of one or two sets is splittable. A collection of three sets is unsplittable if and only if both:
  \begin{enumerate}
  \item every Venn region of multiplicity~$2$ contains an odd number of elements; and
  \item all other Venn regions are empty.
  \end{enumerate}
\end{prop}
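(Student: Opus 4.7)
The plan is to deduce this proposition entirely from the results of the previous section. The one- and two-set claims are immediate: the theorem of Section~\ref{sec:psplit} shows every collection of at most two sets is $p$-splittable for every $p$, and in particular for $p=\frac12$.

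For the three-set statement, I would dispatch the $(\Leftarrow)$ direction as a direct application of the Parity Lemma (Lemma~\ref{lem:parity}). Under the hypotheses every element has multiplicity $2$, so any achievable target sequence $\{t_i\}$ must have $\sum t_i$ divisible by $2$. But $a_i=0$ and $c=0$ make $|B_i|=b_j+b_k$ even, so $t_i=(b_j+b_k)/2$ is uniquely forced, and $\sum t_i=b_1+b_2+b_3$ is odd by hypothesis---a contradiction.

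For the $(\Rightarrow)$ direction I would prove the contrapositive: if some $a_i>0$, or $c>0$, or some $b_i$ is even, then $\B$ is splittable. The argument proceeds in two stages. First, using Lemma~\ref{lem:psplit-monofolds} specialized to $p=\frac12$, I would verify that whenever $\B^{(0)}$ is unsplittable it already suffices to have any single $a_i\geq1$ in order to produce a splitter of $\B$. This reduces the problem to $\B=\B^{(0)}$. Second, I invoke Lemma~\ref{lem:psplit-core} at $p=\frac12$: its condition (b) collapses since $pc<\frac12$ forces $c=0$, so unsplittability of $\B^{(0)}$ demands $c=0$ together with $\sum t_i$ odd. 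A short parity case analysis on $(b_1,b_2,b_3)$---using the freedom to round $t_i$ when $|B_i|$ is odd---then shows that $\sum t_i$ is forced odd precisely when all three $b_i$ are odd.

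The main obstacle I expect is the first stage of the $(\Rightarrow)$ direction: the statement of Lemma~\ref{lem:psplit-monofolds} only asserts that ``sufficiently large'' $a_i$ suffice, so I must inspect its proof to make the threshold concrete at $p=\frac12$. Concretely, with $c=0$ and all $b_i$ odd, the ``near-splitter'' values from Equation~\eqref{eq:psplit-solution} are $\bar b_i=b_i/2$, and I would check that each of the two shifted triples $\{\bar b_1\pm\frac12,\bar b_2\mp\frac12,\bar b_3\mp\frac12\}$ lies inside the intervals $[0,b_i]$ and, together with the choice to include or exclude a single element of the nonzero $a_i$ region and the appropriate rounding of $t_i$, yields a valid integer splitter. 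Once this threshold calculation is in place the rest is bookkeeping.
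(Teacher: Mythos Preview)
Your proposal is correct and follows precisely the route the paper itself indicates: the paper's proof of this proposition consists of the single sentence ``The proposition can easily be extracted from the results of the previous section,'' and your plan does exactly that, invoking the two-set theorem, the Parity Lemma, Lemma~\ref{lem:psplit-core}, and Lemma~\ref{lem:psplit-monofolds}. Your threshold computation for the first stage (that at $p=\frac12$ any $a_i\geq1$ suffices, since with $c=0$ and all $b_i$ odd the half-integer values $\bar b_i=b_i/2$ give both shifted triples within bounds) is the concrete content the paper leaves implicit, and it goes through as you describe.
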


The proposition can easily be extracted from the results of the previous section. It is also possible to give a simple and direct proof, as was done in \cite{reu2014}.

Next we will state our characterization of splittability for collections of four sets. In order to do so we will need to work with four-set Venn diagrams, shown as four-lobed ``hearts'' with each lobe representing one set. The diagram below shows four of the diagrams; one with each of the four sets shaded.

\begin{center}
    \begin{tikzpicture}[scale=0.5]
	\newcommand{\lobe}{
	    \draw (-1,3) -- (-1,0) -- (1,0) -- (1,3) ;
	    \draw (1,3) arc[start angle=0, end angle=180, radius=1] ;
	}
	\newcommand{\doublelobe}{
	    \lobe
	    \begin{scope}[xshift=1cm]
		\lobe
	    \end{scope}
	}
	\begin{scope}[xshift=3cm, rotate around = {45:(-1,0)}]
	    \begin{scope}[opacity=0.4]
		\filldraw (-1,3) -- (-1,0) -- (1,0) -- (1,3) ;
		\filldraw (1,3) arc[start angle=0, end angle=180, radius=1] ;
	    \end{scope}
	    \doublelobe
	\end{scope}
	\begin{scope}[rotate around = {-45:(2,0)}]
	    \doublelobe
	\end{scope}
	\begin{scope}[yshift=0.707cm, rotate around={-45:(2,0)}, transform shape]
	    \foreach \x in {-1, 0,1,2} {
		\foreach \y in {0,1,2,3} {
		    \draw (\x,\y) node(\x\y) { } ; 
		}
	    }
	\end{scope}
    \end{tikzpicture}
    \hspace{1cm}
    \begin{tikzpicture}[scale=0.5]
	\newcommand{\lobe}{
	    \draw (-1,3) -- (-1,0) -- (1,0) -- (1,3) ;
	    \draw (1,3) arc[start angle=0, end angle=180, radius=1] ;
	}
	\newcommand{\doublelobe}{
	    \lobe
	    \begin{scope}[xshift=1cm]
		\lobe
	    \end{scope}
	}
	\begin{scope}[xshift=3cm, rotate around = {45:(-1,0)}]
	    \begin{scope}
		\draw (-1,3) -- (-1,0) -- (1,0) -- (1,3) ;
		\draw (1,3) arc[start angle=0, end angle=180, radius=1] ;
	    \end{scope}
	    \begin{scope}[xshift=1cm, opacity=0.4]
		\filldraw (-1,3) -- (-1,0) -- (1,0) -- (1,3) ;
		\filldraw (1,3) arc[start angle=0, end angle=180, radius=1] ;
	    \end{scope}
	    \doublelobe
	\end{scope}
	\begin{scope}[rotate around = {-45:(2,0)}]
	    \begin{scope}
		\draw (-1,3) -- (-1,0) -- (1,0) -- (1,3) ;
		\draw (1,3) arc[start angle=0, end angle=180, radius=1] ;
	    \end{scope}
	    \begin{scope}
		\draw (-1,3) -- (-1,0) -- (1,0) -- (1,3) ;
		\draw (1,3) arc[start angle=0, end angle=180, radius=1] ;
	    \end{scope}
	    \doublelobe
	\end{scope}
	\begin{scope}[yshift=0.707cm, rotate around={-45:(2,0)}, transform shape]
	    \foreach \x in {-1, 0,1,2} {
		\foreach \y in {0,1,2,3} {
		    \draw (\x,\y) node(\x\y) { } ; 
		}
	    }
	\end{scope}
    \end{tikzpicture}
    \hspace{1cm}
    \begin{tikzpicture}[scale=0.5]
	\newcommand{\lobe}{
	    \draw (-1,3) -- (-1,0) -- (1,0) -- (1,3) ;
	    \draw (1,3) arc[start angle=0, end angle=180, radius=1] ;
	}
	\newcommand{\doublelobe}{
	    \lobe
	    \begin{scope}[xshift=1cm]
		\lobe
	    \end{scope}
	}
	\begin{scope}[xshift=3cm, rotate around = {45:(-1,0)}]
	    \begin{scope}
		\draw (-1,3) -- (-1,0) -- (1,0) -- (1,3) ;
		\draw (1,3) arc[start angle=0, end angle=180, radius=1] ;
	    \end{scope}
	    \begin{scope}[xshift=1cm, color=blue,opacity=0.45]
		\draw (-1,3) -- (-1,0) -- (1,0) -- (1,3) ;
		\draw (1,3) arc[start angle=0, end angle=180, radius=1] ;
	    \end{scope}
	    \doublelobe
	\end{scope}
	\begin{scope}[rotate around = {-45:(2,0)}]
	    \begin{scope}[opacity=0.4]
		\filldraw (-1,3) -- (-1,0) -- (1,0) -- (1,3) ;
		\filldraw (1,3) arc[start angle=0, end angle=180, radius=1] ;
	    \end{scope}
	    \begin{scope}[xshift=1cm]
		\draw (-1,3) -- (-1,0) -- (1,0) -- (1,3) ;
		\draw (1,3) arc[start angle=0, end angle=180, radius=1] ;
	    \end{scope}
	    \doublelobe
	\end{scope}
	\begin{scope}[yshift=0.707cm, rotate around={-45:(2,0)}, transform shape]
	    \foreach \x in {-1, 0,1,2} {
		\foreach \y in {0,1,2,3} {
		    \draw (\x,\y) node(\x\y) { } ; 
		}
	    }
	\end{scope}
    \end{tikzpicture}
    \hspace{1cm}
    \begin{tikzpicture}[scale=0.5]
	\newcommand{\lobe}{
	    \draw (-1,3) -- (-1,0) -- (1,0) -- (1,3) ;
	    \draw (1,3) arc[start angle=0, end angle=180, radius=1] ;
	}
	\newcommand{\doublelobe}{
	    \lobe
	    \begin{scope}[xshift=1cm]
		\lobe
	    \end{scope}
	}
	\begin{scope}[xshift=3cm, rotate around = {45:(-1,0)}]
	    \begin{scope}
		\draw (-1,3) -- (-1,0) -- (1,0) -- (1,3) ;
		\draw (1,3) arc[start angle=0, end angle=180, radius=1] ;
	    \end{scope}
	    \begin{scope}
		\draw (-1,3) -- (-1,0) -- (1,0) -- (1,3) ;
		\draw (1,3) arc[start angle=0, end angle=180, radius=1] ;
	    \end{scope}
	    \doublelobe
	\end{scope}
	\begin{scope}[rotate around = {-45:(2,0)}]
	    \begin{scope}
		\draw (-1,3) -- (-1,0) -- (1,0) -- (1,3) ;
		\draw (1,3) arc[start angle=0, end angle=180, radius=1] ;
	    \end{scope}
	    \begin{scope}[xshift=1cm,opacity=0.4]
		\filldraw (-1,3) -- (-1,0) -- (1,0) -- (1,3) ;
		\filldraw (1,3) arc[start angle=0, end angle=180, radius=1] ;
	    \end{scope}
	    \doublelobe
	\end{scope}
	\begin{scope}[yshift=0.707cm, rotate around={-45:(2,0)}, transform shape]
	    \foreach \x in {-1, 0,1,2} {
		\foreach \y in {0,1,2,3} {
		    \draw (\x,\y) node(\x\y) { } ; 
		}
	    }
	\end{scope}
    \end{tikzpicture}
\end{center}

In Figure~\ref{fig:hearts} we provide a catalog of diagrams depicting eleven types of unsplittable configurations with four or fewer sets. We use the following abbreviations: the symbol $o$ denotes an odd number of elements, $e$ denotes an even number of elements, $1$ denotes one element, $0/1$ denotes zero or one element, $x$ denotes any number of elements, and a blank denotes zero elements. Note that two separate instances of a symbol do \emph{not} necessarily denote the same quantity.

\begin{figure}[ht]
  \input{hearts.tex}
  \caption{Eleven types of unsplittable four-set configurations.\label{fig:hearts}}
\end{figure}

Some additional remarks about the types are in order. First, each of the types has analogous instances in which the sets $B_1,\ldots,B_4$ are permuted.

Next, in Type~$0$ we additionally require that $a_1+a_2$, $b_1+b_2$, and $c_1+c_2$ are all odd numbers. Type~$0$ represents the case when some subcollection of three sets is unsplittable. The instance depicted in the figure shows just the case when the first three sets are unsplittable. There is some overlap between Type~$0$ and degenerate instances of other types; for example an instance of Type~$5$ with all $e$'s being $0$ is also in Type~$0$.

\begin{prop}
  \label{prop:types-unsplittable}
  If $\B$ is a collection of any of the Types $0$--$10$, then $\B$ is unsplittable.
\end{prop}

\begin{proof}[Sketch of proof]
  If $\B$ is of Type~$0$ then $\B$ is unsplittable by Proposition~\ref{prop:oddoddodd}. If $\B$ is of Type~$4$ or~$5$ then $\B$ is unsplittable by Lemma~\ref{lem:parity}, since in each case every element has even multiplicity but the target sum is odd.
  
  If $\B$ is of any of the remaining types, then the proof boils down to elementary linear algebra. As an example, let us suppose that $\B$ is of Type~$1$. Let $o_i$ denote the number of elements of $\bigcap_{j\neq i}B_j\cap B_i^c$, so that $o_i$ is an odd number. Let $t_i=\frac12(1+\sum_{j\neq i}o_i)$ be the target quantity for $S\cap B_i$. Then finding a splitter $S$ for $\B$ is equivalent to solving the integer system:
\[\begin{bmatrix}
    1 & 1 & 1 &   & 1 \\
    1 & 1 &   & 1 & 1 \\
    1 &   & 1 & 1 & 1 \\
      & 1 & 1 & 1 & 1	
  \end{bmatrix}
  \begin{bmatrix} a_1 \\ a_2 \\ a_3 \\ a_4 \\ b \end{bmatrix}
  =
  \begin{bmatrix} t_1 \\ t_2 \\ t_3 \\ t_4 \end{bmatrix}
\]
  subject to the constraints that $0 \leq a_i \leq o_i$, and $0\leq b \leq 1$. Solving this system for $a_1$ in terms of the right-hand sides and $b$, we obtain the equation:
  \begin{align*}
    a_1 &= \frac13(t_1+t_2+t_3-2t_4-b) \\
        &= \frac13\left(\frac{o_1+o_2+o_3+1}{2} +\frac{o_1+o_3+o_4+1}{2}
           +\frac{o_1+o_2+o_4+1}{2} - 2\frac{o_2+o_3+o_4+1}{2}-b\right)\\
        &= \frac12o_1 + \frac16(1-b)
  \end{align*}
  If $b=0$ then this implies $a_1=\frac16(3o_1+1)$. This is a contradiction since $o_1$ odd implies that $3o_1+1 \not\equiv 0 \pmod 6$. On the other hand if $b=1$ then $a_1=\frac12o_1$, which is impossible because $o_1$ is odd.
\end{proof}

We believe that the types shown in Figure~\ref{fig:hearts} completely capture the unsplittable collections of four or fewer sets. The following statement officially has the status of a conjecture, as our justification relies on the use of a computer program for which we have not verified the correctness of the algorithm, implementation, or runtime in a formal way.

\begin{statement}
  \label{stmt:4sets}
  If $\B$ is a four-set, unsplittable collection, then $\B$ falls into one of the Types $0$--$10$.
\end{statement}

We provide a justification that mixes the Reduction Lemma and an exhaustive search. To begin, first note that our program has tested the splittability of every collection of four sets such that every Venn region has size $\leq3$. The code is available in an online repository; see \cite{code}. The program completed successfully on a computing cluster equivalent to several dozen modern laptops in about one day.

Now suppose that $\mathcal B$ is a collection of four sets which is not of any of the Types~$0$--$10$. We wish to show that $\mathcal B$ is splittable. Let $\mathcal B^{(2)}$ be a  collection obtained from $\mathcal B$ emptying each Venn region $R$ that is even in $\mathcal B$, and leaving just $1$ element in each Venn region $R$ that is odd in $\mathcal B$. In other words, $\mathcal B^{(2)}$ is obtained by taking each Venn region ``modulo~$2$''.

If $\mathcal B^{(2)}$ is also not of any of the eleven types, then since its regions all have size $\leq3$ our program has checked that $\mathcal B^{(2)}$ is splittable. By the Reduction Lemma (Lemma~\ref{lem:reduction}), $\mathcal B$ is also splittable, and we are done in this case.

On the other hand, suppose that $\mathcal B^{(2)}$ is of one of the eleven types, say type $T$. Then since $\mathcal B$ is not of type~$T$, there must exist a Venn region $R$ such that type~$T$ prescribes that $R$ has at most $1$ element, and such that the size of $R$ in $\mathcal B^{(2)}$ is strictly less than the size of $R$ in $\mathcal B$. (In the notation of the figure, region $R$ must be labeled empty, $0/1$, or $1$.) Now let $\mathcal B'$ be the configuration obtained from $\mathcal B^{(2)}$ by adding $2$ elements to $R$. Thus $\mathcal B'$ is not among the Types~$0$--$10$. And since the regions of $\B'$ still have size $\leq3$, our program has checked that $\mathcal B'$ is splittable. It again follows from Lemma~\ref{lem:reduction} that $\mathcal B$ is splittable

This concludes the justification of Statement~\ref{stmt:4sets} modulo the correctness of our computer program.

\subsection{The prevalence of splittability}

In this subsection we address several questions about how commonly splittable and unsplittable collections occur. Our results for small collections of sets indicate that unsplittability is very rare. It is natural to ask whether this remains true for collections with a larger number of sets.

To begin, if one looks at the types of four-set unsplittable collections in Figure~\ref{fig:hearts}, one might surmise that unsplittable configurations should have many Venn regions with few or zero elements. We next establish that if a collection has certain Venn regions with sufficiently many elements, then the collection must in fact be splittable.

\begin{thm}
  \label{thm:dmono}
  Let $D$ be an integer bound on the discrepancy of collections of $n$ sets. Suppose that $\B$ is a collection of $n$ sets such that each Venn region of multiplicity~$1$ contains at least $D-1$ elements. Then $\B$ is splittable.
\end{thm}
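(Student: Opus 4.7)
The plan is to apply the discrepancy bound to the reduced collection obtained by deleting the multiplicity-$1$ elements, and then use the room left in each multiplicity-$1$ Venn region $R_i$ of $B_i$ to fine-tune the near-splitter into an exact splitter. Concretely, set $B_i' = B_i \setminus R_i$ and $\B' = \{B_1', \ldots, B_n'\}$. Since $\B'$ is a collection of $n$ sets, by hypothesis there exists $T' \subseteq \bigcup \B'$ with $\big||T' \cap B_i'| - |B_i' \setminus T'|\big| \leq D$ for every $i$.

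Writing $a_i = |T' \cap B_i'|$, I would construct the splitter as $T = T' \cup \bigcup_i S_i$, where for each $i$ one chooses an arbitrary subset $S_i \subseteq R_i$ of some cardinality $r_i \in \{0, 1, \ldots, |R_i|\}$. Because $R_i \cap B_j = \emptyset$ for $j \neq i$, this choice affects only $|T \cap B_i| = a_i + r_i$, and so it suffices to show that for each $i$ the interval $[a_i, a_i + |R_i|]$ contains at least one of $\lfloor |B_i|/2 \rfloor$, $\lceil |B_i|/2 \rceil$; one can then take $\tau_i$ to be any such value and set $r_i = \tau_i - a_i$.

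To verify this containment, note that the discrepancy bound gives $|2a_i - |B_i'|| \leq D$ with the left side sharing the parity of $|B_i'|$. Using $|B_i'|/2 = |B_i|/2 - |R_i|/2$, one obtains $a_i \leq |B_i|/2 + (D - |R_i|)/2$ and $a_i + |R_i| \geq |B_i|/2 + (|R_i| - D)/2$. When $|R_i| \geq D$, these inequalities immediately place $|B_i|/2$ inside $[a_i, a_i + |R_i|]$, and since the interval has length $|R_i| \geq 1$ it then contains one of $\lfloor |B_i|/2 \rfloor$, $\lceil |B_i|/2 \rceil$. For the boundary case $|R_i| = D - 1$, the bounds become $a_i \leq |B_i|/2 + \tfrac{1}{2}$ and $a_i + |R_i| \geq |B_i|/2 - \tfrac{1}{2}$; combined with the integrality of $a_i$ and the parity constraint, which sharpens the discrepancy bound by $1$ whenever $D$ and $|B_i'|$ differ in parity, these still suffice to place an admissible target inside the interval.

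The principal obstacle is the boundary case $|R_i| = D - 1$, which requires a brief case analysis on the parities of $|B_i|$ and $D$. In each sub-case one checks, using either the parity sharpening of the discrepancy bound or the integrality of $a_i$, that the appropriate rounding of $|B_i|/2$ lies in $[a_i, a_i + |R_i|]$. Once the required $r_i$ is known to exist for every $i$, any choice of $S_i \subseteq R_i$ with $|S_i| = r_i$ produces the desired splitter.
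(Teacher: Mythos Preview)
Your argument is correct and follows essentially the same strategy as the paper: strip out multiplicity-$1$ elements, apply the discrepancy bound to the reduced collection, then use the removed elements to rebalance each set individually. The only cosmetic difference is that the paper deletes exactly $D-1$ elements from each $R_i$ (leaving the rest in place), whereas you delete all of $R_i$; both choices work for the same reason.

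One small remark: your boundary case $|R_i|=D-1$ does not actually require the parity sharpening you mention. From $a_i \le |B_i|/2 + \tfrac12$ and $a_i + |R_i| \ge |B_i|/2 - \tfrac12$, integrality alone gives $a_i \le \lceil |B_i|/2\rceil$ and $a_i + |R_i| \ge \lfloor |B_i|/2\rfloor$, and since $a_i \le a_i + |R_i|$ the integer interval $[a_i,\,a_i+|R_i|]$ must meet $\{\lfloor |B_i|/2\rfloor,\lceil |B_i|/2\rceil\}$. So the case split on parities of $|B_i|$ and $D$ can be dropped.
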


\newcommand{\D}{\mathcal{D}}
\begin{proof}
  Let $\B = \{B_1, \ldots, B_n\}$ be such a collection, and let $R_i$ denote the Venn region of multiplicity~$1$ contained in $B_i$. Then in $\B$, each region $R_i$ has at least $D-1$ elements, so we may let $\B^{(0)}=\{B_1^{(0)},\ldots,B_n^{(0)}\}$ be the collection obtained from $\B$ by deleting $D-1$ elements from each of the Venn regions $R_i$.
    
  Since $\disc(\B^{(0)})\leq D$, we can find a set $S^{(0)}$ such that for all $i$ we have\
  \[-D\leq|B_i^{(0)}\cap S^{(0)}|-|B_i^{(0)}\setminus S^{(0)}|\leq D
  \]
  Now for each $i$ we restore the $D-1$ deleted elements of the Venn region $R_i$. As we do so, we build a set $S$ by beginning with $S^{(0)}$, then placing \emph{some} of the $D-1$ restored elements into $S$ and the rest into $S^c$. It is easy to do so in such a way that for each $i$,
  \[-1\leq|B_i\cap S|-|B_i\setminus S|\leq 1
  \]
  and as a result $S$ splits $\B$.
\end{proof}

Of course in the above result, $D$ can be taken to be the ceiling of Spencer's bound $6\sqrt{n}$ discussed in the introduction. 

Now let $f(n,k)$ denote the fraction of all $n$-set collections on $k$ elements which are splittable. The next result implies that if we fix $n$ and let $k$ get large, then $f(n,k)$ converges to $1$. In fact, the same holds even if we let $n,k\to\infty$ with $k$ growing fast enough with respect to $n$.

\begin{thm}
  Suppose that $k=k(n)$ lies in $\omega(2^nn)$, that is, $k$ grows asymptotically strictly faster than $2^nn$. Then $f(n,k)\to1$ as $n\to\infty$.
\end{thm}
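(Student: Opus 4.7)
The plan is to regard a uniformly random $n$-set collection on $\{1,\ldots,k\}$ as one in which each element $j$ is independently assigned a uniformly random ``type'' $\tau_j\subseteq\{1,\ldots,n\}$, with $B_i=\{j:i\in\tau_j\}$. This distributes $\B$ uniformly over the $2^{nk}$ possible incidence matrices, so the probability of splittability under this model equals $f(n,k)$. I would then verify the hypothesis of Theorem~\ref{thm:dmono} with high probability, taking $D=\ceiling{K\sqrt{n}}$ where $K$ is Spencer's constant.

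The key observation is that the multiplicity-$1$ Venn region $R_i$ of $B_i$ consists precisely of the elements $j$ with $\tau_j=\{i\}$, so $|R_i|$ is a binomial random variable with parameters $k$ and $2^{-n}$, having mean $k/2^n$. Since $k\in\omega(2^nn)$, this mean is $\omega(n)$, which will dominate $D-1=O(\sqrt{n})$ for all sufficiently large $n$. A standard multiplicative Chernoff bound yields
\[\P\!\left(|R_i|<\tfrac{k}{2\cdot2^n}\right)\leq\exp\!\left(-\tfrac{k}{8\cdot2^n}\right),\]
and a union bound over the $n$ singleton regions gives probability at most $n\exp(-k/(8\cdot2^n))$ that any $|R_i|$ falls below this threshold. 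Because the exponent $k/(8\cdot2^n)\in\omega(n)$ grows strictly faster than $\log n$, this probability tends to $0$.

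On the complementary event every $|R_i|\geq k/(2\cdot2^n)\geq D-1$ for all sufficiently large $n$, so Theorem~\ref{thm:dmono} guarantees that $\B$ is splittable. It follows that $f(n,k)\to1$. No step looks especially delicate, and there is no real obstacle beyond checking the bookkeeping: the precise hypothesis $k\in\omega(2^nn)$ is calibrated exactly so that the Chernoff exponent outpaces $\log n$ (making the union bound succeed) and simultaneously so that the typical size $k/2^n$ of a singleton region asymptotically dominates Spencer's bound $K\sqrt{n}$.
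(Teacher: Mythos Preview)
Your proposal is correct and follows essentially the same strategy as the paper: model the random collection by assigning each element independently to a uniformly random subset of $\{1,\ldots,n\}$, show that with high probability every multiplicity-$1$ region has at least $D-1$ elements, and then invoke Theorem~\ref{thm:dmono}. The only difference is that you use a multiplicative Chernoff bound where the paper uses Chebyshev's inequality; both suffice, and your bound is in fact sharper than needed.
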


\begin{proof}
  Referring to Theorem~\ref{thm:dmono} above, let $D=D(n)=6\sqrt{n}$. Our strategy is to show that if $k$ is as large as in our hypothesis, then it is unlikely that any of the multiplicity~$1$ regions will contain fewer than $D$ elements. Thus by Theorem~\ref{thm:dmono} it is likely that a given configuration will be splittable.

  For this note that if a single element is randomly assigned to be included in or in excluded from each of $n$ sets, then the probability that the element will lie in any given Venn region is $q=\frac{1}{2^n}$. Next assign $k$ elements randomly and independently to the sets, and let the random variable $X$ denote the number elements of a fixed Venn region of multiplicity $1$. By the basic properties of the binomial distribution, the expected value of $X$ is $\mu=kq=\frac{k}{2^n}$ and the standard deviation of $X$ is $\sigma=\sqrt{kq(1-q)}=\sqrt{k\frac{1}{2^n}(1-\frac{1}{2^n})}$.

  We now wish to bound from above $\Pr[X<D]$. Letting $t=\frac{\mu-D}{\sigma}$, we have that $\Pr[X<D]\leq Pr[|X-\mu|\geq t\sigma]$. Chebyshev's inequality now gives
  \[\Pr[X<D] \leq \frac{1}{t^2}
  \]
  Substituting the expressions for $t,$ $\mu$, $\sigma$, $D$, and simplifying we obtain
  \begin{align*}
    \Pr[X<D]
    &\leq\frac{\frac{k}{2^n}(1-\frac{1}{2^n})}{(\frac{k}{2^n}-6\sqrt{n})^2}\\
    &\leq \frac{2^n}{k-12\sqrt{n}}
  \end{align*}
  Our hypothesis about the growth of $k$ implies that the latter quantity is $o(1/n)$. Finally the probability that any of the $n$ regions of multiplicity $1$ has fewer than $D$ elements is bounded by $n\Pr[X<D]$ and is thus $o(1)$, or in other words, converges to $0$.
\end{proof}

We remark that if we instead fix $k\geq 3$ and let $n$ become large, then $f(n,k)$ converges to $0$. This is simply because there exists a configuration with three elements that is unsplittable, namely $\B_0=\{\{1,2\},\{1,3\},\{2,3\}\}$. Thus as the number of sets $n$ increases, it becomes very likely that the restriction of the collection to the points $1,2,3$ will contain $\B_0$, and therefore that the collection will be unsplittable.


\bibliographystyle{alpha}
\bibliography{bib}

\end{document}